\newenvironment{proof}{{\noindent \it Proof.}}{\hfill $\blacksquare$\par}
\newtheorem{theorem}{Theorem}[section]
\newtheorem{proposition}[theorem]{\rm\bfseries Proposition}
\newtheorem{definition}[theorem]{\rm\bfseries Definition}
\newtheorem{lemma}[theorem]{Lemma}
\newtheorem{corollary}[theorem]{\rm\bfseries Corollary}
\begin{document}

\vspace*{10mm}

\noindent
{\Large \bf Extremal Kirchhoff index in polycyclic chains}

\vspace{7mm}

\noindent
{\large \bf Hechao Liu, Lihua You$^{*}$}

\vspace{7mm}

\noindent
School of Mathematical Sciences, South China Normal University, Guangzhou, 510631, P. R. China,
e-mail: {\tt hechaoliu@m.scnu.edu.cn},\quad {\tt ylhua@scnu.edu.cn}\\[2mm]
$^*$ Corresponding author

\vspace{7mm}

\noindent
Received 12 October 2022

\vspace{10mm}

\noindent
{\bf Abstract} \
The Kirchhoff index of graphs, introduced by Klein and Randi\'{c} in 1993, has been known useful in the study of computer science, complex network and quantum chemistry.
The Kirchhoff index of a graph $G$ is defined as $Kf(G)=\sum\limits_{\{u,v\}\subseteq V(G)}\Omega_{G}(u,v)$, where $\Omega_{G}(u,v)$ denotes the resistance distance between $u$ and $v$ in $G$.

In this paper, we determine the maximum (resp. minimum) $k$-polycyclic chains with respect to Kirchhoff index for $k\geq 5$, which extends the results of Yang and Klein [Comparison theorems on resistance distances and Kirchhoff indices of $S,T$-isomers, Discrete Appl. Math. 175 (2014) 87-93], Yang and Sun [Minimal hexagonal chains with respect to the Kirchhoff index, Discrete Math. 345 (2022) 113099], Sun and Yang [Extremal pentagonal chains with respect to the Kirchhoff index, Appl. Math. Comput. 437 (2023) 127534] and Ma [Extremal octagonal chains with respect to the Kirchhoff index, arXiv: 2209.10264].

\vspace{5mm}

\noindent
{\bf Keywords} \ Kirchhoff index, resistance distance, polycyclic chain.

\noindent
\textbf{Mathematics Subject Classification:} 05C09, 05C12, 05C92

\baselineskip=0.30in

\section{Introduction}

\subsection{Background}
\hskip 0.6cm
Let $G$ be a connected graph with vertex set $V(G)$ and edge set $E(G)$.
Let $d_{G}(u)$ be the degree of vertex $u$ in $G$.
The distance between vertex $u$ and vertex $v$ is denoted by $d_{G}(u,v)$.
If we replace each edge of the graph $G$ with a unit resistor and regard the graph $G$ as an electrical network $N$, then we define the effective resistance of vertex $u$ and vertex $v$ in the electrical network $N$ as the resistance distance between vertex $u$ and vertex $v$ in the graph $G$, and denoted by $\Omega_{G}(u,v)$.
In this paper, all notations and terminologies used but not defined can refer to Bondy and Murty \cite{bond2008}.

The Wiener index is one of the oldest and most studied topological index from application and theoretical viewpoints. As a extension of the Wiener index, The Kirchhoff index is an important measure which contains more information than the Wiener index and plays an essential role in the research of QSAR and QSPR.

The Wiener index \cite{wien1947} of graph $G$ is defined as
$W(G)=\sum\limits_{\{u,v\}\subseteq V(G)}d_{G}(u,v)$,
replacing distance with resistance distance in the definition of Wiener index, we can obtain the Kirchhoff index, which is defined as \cite{klra1993}
$$Kf(G)=\sum\limits_{\{u,v\}\subseteq V(G)}\Omega_{G}(u,v).$$
Some mathematical and physical interpretations of Kirchhoff index can be found in \cite{kldj1997,klzh1995}.
The extremal Kirchhoff index had been considered on unicyclic graphs \cite{yaji2008}, fully loaded unicyclic graphs \cite{gdch2009}, cacti \cite{whwa2010}, graphs with given cut edges \cite{deng2010}, graphs with a given vertex bipartiteness \cite{lipa2016}, random polyphenyl and spiro chains \cite{hkde2014},
linear hexagonal (cylinder) chain \cite{huli2020}, generalized phenylenes \cite{lilz2020,zhli2019}, M\"{o}bius/cylinder octagonal chain \cite{liwl2022},
linear phenylenes \cite{peli2017}, connected (molecular) graphs \cite{zhtr2009}, and so on.

Some molecular descriptors of polycyclic chains had been considered for many years.
Such as Wiener index \cite{chli2022,cayz2020}, Kirchhoff index \cite{maqi2022,suya2023,yakl2014,yasu2022,yawa2019,zhll2022}, Tutte polynomials \cite{chgu2019}, Merrified-Simmons index \cite{cayz2017}, Kekule structures \cite{tztd2019}, forcing spectrum \cite{zhji2021}, $k$-matching \cite{cazh2008}, Hosoya index \cite{qizh2012}, and so on.

Let $Q_{h}$ be the linear quadrilateral chain with $h$ squares and $S_{i}$ $(1\leq i\leq h)$ the $i$-th square of $Q_{h}$.
Then the $k$-polycyclic chain $P_{h}$ can be obtained from $Q_{h}$ by adding $k-4$ vertices to $S_{i}$ $(1\leq i\leq h)$ by adding $0$ (resp. $1,2,\cdots,k-4$) vertices to the top edge of $S_{i}$ $(1\leq i\leq h)$ and the remaining vertices to the bottom edge of $S_{i}$ $(1\leq i\leq h)$. In Figure \ref{fig-11}, either $D_{5}$ or $L_{5}$ is a special $P_{5}$, $Z_{6}$ is a special $P_{6}$.

For convenience, we suppose that we add $\lceil \frac{k-4}{2}\rceil$ vertices to the top edges of $S_{1}$ and $S_{h}$, $ \lfloor \frac{k-4}{2}\rfloor$ vertices to the bottom edges of $S_{1}$ and $S_{h}$,
and for the $S_{i+1}$ $(1\leq i\leq h-2)$, we give a number $w_{i}=0$ (resp. $1,2,\cdots,k-4$) to the $k$-polygon if the $k$-polygon is obtained by adding $w_{i}$ vertices to the top edge of $S_{i+1}$. Then we can use a $(h-2)$-vector $w=(w_{1},w_{2},\cdots,w_{h-2})$ to denote the $k$-polycyclic chain, where $w_{i}\in \{0,1,\cdots,k-4\}$.
Let $P_{h}(w)$ (or simply $P(w)$) be the $k$-polycyclic chain with $h$ $k$-polygons and $w=(w_{1},w_{2},\cdots,w_{h-2})$ be a $(h-2)$-tuple of $0,1,\cdots,k-4$.

The $k$-polycyclic chain $P(\underbrace{0,0,\cdots,0}_{h-2})$ or $P(\underbrace{k-4,k-4,\cdots,k-4}_{h-2})$ is called a helicene $k$-polycyclic chain, where $P(\underbrace{0,0,\cdots,0}_{h-2})\cong P(\underbrace{k-4,k-4,\cdots,k-4}_{h-2})$, and denoted by $D_{h}$.
If $k\geq 6$ is even, the $k$-polycyclic chain $P(\underbrace{\frac{k-4}{2},\frac{k-4}{2},\cdots,\frac{k-4}{2}}_{h-2})$ is called a linear $k$-polycyclic chain, and denoted by $L_{h}$.
If $k\geq 5$ is odd, then $k$-polycyclic chain $P(\underbrace{\lfloor\frac{k-4}{2}\rfloor,\lceil\frac{k-4}{2}\rceil,\lfloor\frac{k-4}{2}\rfloor,
\lceil\frac{k-4}{2}\rceil\cdots}_{h-2})$ or $P(\underbrace{\lceil\frac{k-4}{2}\rceil,\lfloor\frac{k-4}{2}\rfloor,\lceil\frac{k-4}{2}\rceil,
\lfloor\frac{k-4}{2}\rfloor\cdots}_{h-2})$ is called a zigzag chain, denoted by $Z_{h}$, where $P(\underbrace{\lfloor\frac{k-4}{2}\rfloor,\lceil\frac{k-4}{2}\rceil,\lfloor\frac{k-4}{2}\rfloor,
\lceil\frac{k-4}{2}\rceil\cdots}_{h-2})\cong P(\underbrace{\lceil\frac{k-4}{2}\rceil,\lfloor\frac{k-4}{2}\rfloor,\lceil\frac{k-4}{2}\rceil,
\lfloor\frac{k-4}{2}\rfloor\cdots}_{h-2})$. Figure \ref{fig-11} gives $D_{5}$ with $k=6$, $L_{5}$ with $k=6$ and $Z_{6}$ with $k=7$.

\begin{figure}[ht!]
  \centering
  \scalebox{.14}[.14]{\includegraphics{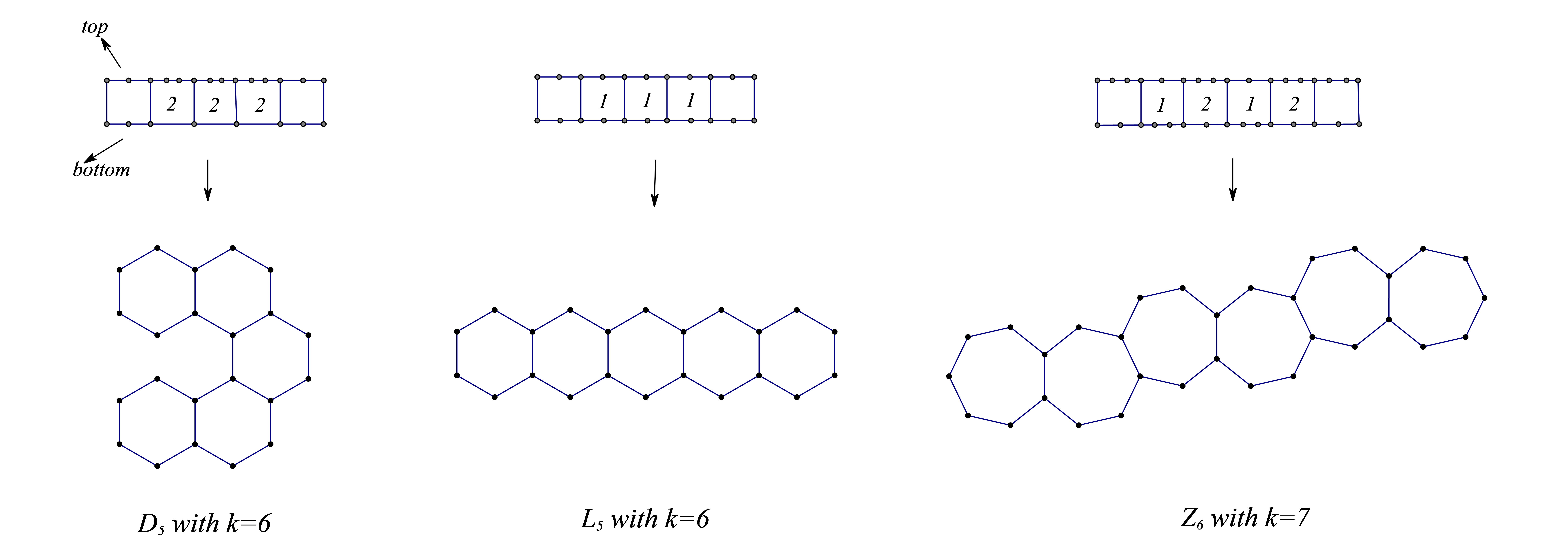}}
  \caption{$D_{5}$ with $k=6$, $L_{5}$ with $k=6$ and $Z_{6}$ with $k=7$.}
 \label{fig-11}
\end{figure}

\subsection{Main results}
\hskip 0.6cm
Our main results are shown as follows.

\begin{theorem}\label{t1-1}
Let $\mathcal{P}_{h}$ be the set of $k$-polycyclic chains with $h$ $k$-polygons \rm{($k\geq 5$)}. Then for any $G\in \mathcal{P}_{h}$, we have
$$Kf(G)\geq Kf(P(\underbrace{0,0,\cdots,0}_{h-2})),$$
with equality if and only if $G\cong D_{h}$.
\end{theorem}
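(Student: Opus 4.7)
The plan is to prove Theorem \ref{t1-1} by a local-perturbation argument: I will show that for any $P(w) \not\cong D_h$ there is a coordinate $i$ whose modification strictly decreases $Kf$, so iteration drives every chain to $D_h$.

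To carry out the perturbation, I would fix $i$ and split $P(w)$ at the polygon $S_{i+1}$ as a double $2$-sum of three pieces: the left subchain $L$ on polygons $S_1,\ldots,S_i$, the polygon $S_{i+1}$ itself, and the right subchain $R$ on $S_{i+2},\ldots,S_h$. Let $e_L = u_Lv_L$ and $e_R = u_Rv_R$ be the shared edges; the parameter $w_i$ controls only the position of $e_R$ on the $k$-cycle underlying $S_{i+1}$, so that its top arc has $w_i+1$ edges and its bottom arc has $k-3-w_i$ edges. Intra-$L$ and intra-$R$ resistance distances are independent of $w_i$, so only pairs with an endpoint in $S_{i+1}$ or pairs crossing from $L$ to $R$ contribute to the change. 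The series-parallel calculus inside the $k$-cycle expresses each such distance as a combination of (i) cycle resistances of the form $p(k-p)/k$, where $p$ is an arc length determined by $w_i$, and (ii) ``wing sums'' such as $\sum_{u\in L}\Omega_L(u,u_L)$ and $\sum_{u\in L}\Omega_L(u,v_L)$, together with their analogues in $R$, all independent of $w_i$. Collecting everything yields $Kf(P(w)) = A + B(w_i)$, with $A$ independent of $w_i$ and $B$ an explicit sum of unimodal $p(k-p)$ expressions weighted by nonnegative constants built from the wing sums and the sizes $|L|, |R|$; unimodality of $p \mapsto p(k-p)$ then forces the minimum of $B$ to the endpoints $w_i \in \{0, k-4\}$.

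The main obstacle is twofold. First, the bookkeeping: reducing every $\Omega(u,v)$ for $u \in L, v \in R$ requires a four-terminal current analysis inside $S_{i+1}$ (current enters at $\{u_L, v_L\}$, splits across both arcs, and exits at $\{u_R, v_R\}$), and the exact coefficients in $B$ must be verified to be nonnegative. Second, and more delicate, is uniqueness: the function $B$ may carry the apparent symmetry $w_i \mapsto k-4-w_i$ inherited from reflecting the polygon, tempting $(k-4,\ldots,k-4)$ and even mixed configurations to tie with $D_h$. I would address this by keeping the two wing sums $\sum_{u\in L}\Omega_L(u,u_L)$ and $\sum_{u\in L}\Omega_L(u,v_L)$ separate rather than symmetrized, so that the cycle contributions couple them asymmetrically whenever $L$ is not itself top-bottom symmetric, yielding strict monotonicity toward a single endpoint. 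Once this strict decrement lemma is secured, the theorem follows by iterated reduction of each $w_i$ to $0$.
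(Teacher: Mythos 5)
Your plan diverges from the paper's (the paper never computes $Kf$ as an explicit function of $w_i$; it works entirely with the Yang--Klein $S,T$-isomer formula of Lemma \ref{l13-5} plus comparison lemmas proved by $\Delta$--$Y$ and series--parallel reductions), and as it stands it has two genuine gaps. First, the central analytic claim --- that after the four-terminal reduction one gets $Kf(P(w))=A+B(w_i)$ with $B$ a nonnegative combination of unimodal terms $p(k-p)$ --- is asserted, not derived. Once the left and right wings are replaced by equivalent resistances across $e_L$ and $e_R$, the cycle $C_{i+1}$ carries two chords, and both the intra-cycle resistances and the $L$--$R$ transfer terms are rational functions of the arc lengths $p=w_i+1$, $q=k-3-w_i$ (with denominators depending on the wing resistances), not clean $p(k-p)/k$ expressions; endpoint-minimality of $B$ over $w_i\in\{0,\dots,k-4\}$ is plausible but would itself require a careful concavity/monotonicity verification that your outline does not supply. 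This is exactly the bookkeeping the paper sidesteps by only ever comparing isomer pairs via Lemma \ref{l13-5}.

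Second, and more seriously, even if endpoint-minimality in each coordinate were established, it only yields $w_i\in\{0,k-4\}$ for every $i$; the heart of Theorem \ref{t1-1} is ruling out mixed corner patterns such as $w_{i-1}=0$, $w_i=k-4$, and your tie-breaking mechanism does not do this. You claim that keeping the wing sums of $L$ unsymmetrized ``yields strict monotonicity toward a single endpoint,'' but which endpoint wins is not determined by the asymmetry of $L$ alone: in the $S,T$-isomer formula the sign of $Kf(P(w))-Kf(P(w^*))$ is the sign of the product $(\Omega_{N_1}(u)-\Omega_{N_1}(v))(\Omega_{N_2}(y)-\Omega_{N_2}(x))$, i.e.\ it is governed jointly by both wings, and in your fixed-tail family (where the tail is not flipped, unlike in the isomer swap) the comparison between $w_i=0$ and $w_i=k-4$ depends on both $L$ and $R$ in an even less transparent way. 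Determining the sign of the left-wing asymmetry when $w_{i-1}=0$ is precisely what the paper's Lemma \ref{l2-3} (proved via repeated $\Delta$--$Y$ transformations) accomplishes, and nothing in your proposal replaces it. Finally, ``iterated reduction of each $w_i$ to $0$'' is not a valid scheme as stated, since the conditionally optimal endpoint for one coordinate may change after you modify another; the argument should instead be run at a global minimizer (as the paper does), deriving a contradiction from any kink. Until the explicit form of $B$, the sign analysis at the endpoints, and the exclusion of mixed $\{0,k-4\}$ patterns are actually carried out, the proposal does not establish the theorem.
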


\begin{theorem}\label{t1-2}
Let $\mathcal{P}_{h}$ be the set of $k$-polycyclic chains with $h$ $k$-polygons \rm{($k\geq 5$)}. Then for any $G\in \mathcal{P}_{h}$, we have
$$Kf(G)\leq Kf(P(\underbrace{\lfloor\frac{k-4}{2}\rfloor,\lceil\frac{k-4}{2}\rceil,
\lfloor\frac{k-4}{2}\rfloor,\lceil\frac{k-4}{2}\rceil,\cdots}_{h-2})),$$
with equality if and only if
$
G\cong
\begin{cases}
L_{h} &,\ if\ n\ is\ even\\[3mm]
Z_{h} &,\ if \ n\ is\ odd
\end{cases}
$.
\end{theorem}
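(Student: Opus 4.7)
The plan is to prove Theorem~\ref{t1-2} by an iterative local-swap argument: starting from any $P(w) \in \mathcal{P}_h$ that is not the designated extremal chain, I would exhibit a single-coordinate modification $w \to w'$ that strictly increases $Kf$ while moving $w$ one step closer to the target pattern, and iterate to termination at $L_h$ (for $k$ even) or $Z_h$ (for $k$ odd).

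For a single swap at position $i$, I would exploit the 2-vertex-cut structure of the chain. The polygon $S_{i+1}$ attaches to the rest of $P(w)$ through the two shared edges $e_i = u_iv_i$ and $e_{i+1} = u_{i+1}v_{i+1}$, so the four endpoints $\{u_i,v_i,u_{i+1},v_{i+1}\}$ form a vertex cut splitting $P(w)$ into a left fragment $L_i$, the polygon $S_{i+1}$, and a right fragment $R_i$. Changing $w_i$ by $\pm 1$ shifts one free vertex of $S_{i+1}$ between its top and bottom arcs without touching $L_i$ or $R_i$. Using a Yang--Klein style $S,T$-isomer comparison (the tool the paper identifies as the starting point of this line of work), I would write $Kf(P(w')) - Kf(P(w))$ as a linear combination of differences of resistance distances between the four port vertices, with coefficients depending only on the vertex counts of $L_i$ and $R_i$. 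Each intra-polygon resistance takes the form $A(k-A)/k$ for a path length $A = A(w_i)$ that varies linearly with $w_i$, so each such term is a concave function of $w_i$; adding these weighted concave contributions gives a concave function of $w_i$ that strictly increases as $w_i$ moves toward $(k-4)/2$ until the balance value is reached.

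The main obstacle is the odd-$k$ case. Here $k-4$ is odd, no integer $w_i$ balances the arcs exactly, and the sum of corner-pair resistances within a single polygon coincides for $w_i = \lfloor (k-4)/2\rfloor$ and $w_i = \lceil (k-4)/2\rceil$, so a purely local comparison cannot distinguish $Z_h$ from the ``constant near-balanced'' chain $P(\lfloor (k-4)/2\rfloor,\ldots,\lfloor (k-4)/2\rfloor)$. To secure the extremality of $Z_h$ I would perform a refined two-polygon comparison: when two consecutive coordinates violate the zigzag alternation, flipping one of them must be shown to strictly increase $Kf$ by exploiting how the \emph{orientations} of adjacent polygons interact through their shared port edges rather than just the arc lengths. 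Making this two-coordinate comparison rigorous, and checking that the resulting swap rule drives the iteration all the way to $Z_h$, is the hardest step of the proof.
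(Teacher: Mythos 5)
There is a genuine gap, and it sits at the heart of your plan. Your single-coordinate swap $w_i\to w_i\pm1$ (moving one free vertex of $S_{i+1}$ between its arcs while leaving both fragments untouched) is \emph{not} an $S,T$-isomer operation, so the Yang--Klein formula (Lemma \ref{l13-5}) does not apply to it. That formula requires deleting a cut consisting of two independent edges and re-joining crosswise; in a polycyclic chain the only such move cuts the two parallel edges $ux,vy$ inside a polygon and flips one whole side of the chain, which changes $w_i$ \emph{and} complements every later coordinate to $k-4-w_j$. This is exactly the transformation the paper uses in Lemmas \ref{l2-6} and \ref{l2-7} (and in the proof of Theorem \ref{t1-2}); there is no analogous exact identity for your local move. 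Moreover, your claimed expression for $Kf(P(w'))-Kf(P(w))$ as a combination of port-to-port resistance differences ``with coefficients depending only on the vertex counts of $L_i$ and $R_i$'' is false: a four-port reduction of the middle polygon interacts with the fragments through their effective resistances across the shared edges and through sums of resistances from the port vertices to all fragment vertices (quantities of the type $\Omega_{N_1}(u)=\sum_z\Omega_{N_1}(u,z)$, which is what Lemma \ref{l13-5} actually involves), not merely through $|V(L_i)|$ and $|V(R_i)|$. Consequently the concavity-in-$w_i$ argument built on $A(k-A)/k$ is a heuristic about one family of terms, not a proof that $Kf$ increases under your move.

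Second, the odd-$k$ case is not an obstacle you have deferred but the part where the theorem's content lies, and your proposal leaves it open by your own admission. The paper resolves it with a second application of the same tail-flipping $S,T$-isomer swap: if two consecutive coordinates both equal $\lceil\frac{k-4}{2}\rceil$ (or both $\lfloor\frac{k-4}{2}\rfloor$), then Lemmas \ref{l2-2} and \ref{l2-4} (proved by $\Delta$--$Y$ reductions and series/parallel rules, comparing $\Omega_{N_1}(u)$ with $\Omega_{N_1}(v)$) give the sign of one factor in Lemma \ref{l13-5}, a direct triangle-inequality/cut-vertex estimate gives the sign of the other, and $Kf$ strictly increases; this forces the alternating pattern, i.e.\ $Z_h$. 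Your ``two-polygon orientation'' comparison is exactly the missing ingredient, and without a formula playing the role of Lemma \ref{l13-5} for your local move it is not clear how you would obtain the sign of the change. Finally, note that even if each step were justified, you would still need the termination/uniqueness analysis for the equality case, which your sketch does not address.
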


Let $k=5,6,8$. Then by Theorems \ref{t1-1} and \ref{t1-2}, we have the following corollaries immediately, which is main results of \cite{suya2023,yakl2014,yasu2022,maqi2022}.

\begin{corollary}\label{c1-6}{\rm\cite{suya2023}}
Among all pentagonal chains with given the number of pentagons, the helicene pentagonal chain (resp. zigzag pentagonal chain) has the minimum (resp. maximum) Kirchhoff index.
\end{corollary}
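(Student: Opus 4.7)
The plan is to derive Corollary \ref{c1-6} as an immediate specialization of Theorems \ref{t1-1} and \ref{t1-2} to the case $k = 5$. The first step is to unpack the parameters of the $k$-polycyclic chain model when $k = 5$: we have $k - 4 = 1$, so each $w_i$ in the defining tuple $w = (w_1, \ldots, w_{h-2})$ can only take the values $0$ or $1$, and $\lfloor (k-4)/2 \rfloor = 0$, $\lceil (k-4)/2 \rceil = 1$. Consequently the helicene pentagonal chain of the corollary is exactly $D_h = P(\underbrace{0,0,\ldots,0}_{h-2})$ (which is isomorphic to $P(\underbrace{1,1,\ldots,1}_{h-2})$), and the zigzag pentagonal chain of the corollary is exactly $Z_h = P(0,1,0,1,\ldots)$ (isomorphic to $P(1,0,1,0,\ldots)$).

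For the minimum assertion, I would simply apply Theorem \ref{t1-1} with $k = 5$: it gives $Kf(G) \geq Kf(P(0,0,\ldots,0)) = Kf(D_h)$ for every $G \in \mathcal{P}_h$, with equality iff $G \cong D_h$. This is exactly the claim that the helicene pentagonal chain minimizes the Kirchhoff index among pentagonal chains on $h$ pentagons.

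For the maximum assertion, I would invoke Theorem \ref{t1-2} with $k = 5$. Since $5$ is odd, the extremal chain in the statement of Theorem \ref{t1-2} reduces to $P(0,1,0,1,\ldots)$ (or the isomorphic variant starting with $1$), which by the discussion above is $Z_h$. Thus $Kf(G) \leq Kf(Z_h)$ with equality iff $G \cong Z_h$, yielding the claim for the maximum.

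Since this deduction is purely a matter of substituting $k = 5$ and identifying the resulting chains with $D_h$ and $Z_h$, there is no real obstacle: the entire work is carried in Theorems \ref{t1-1} and \ref{t1-2}. The only mildly delicate point is verifying that the tuples produced by the general statements genuinely correspond to the helicene and zigzag as named in \cite{suya2023,yakl2014,yasu2022}, but this is immediate from the construction of $P(w)$ in Section~1.1 when $k = 5$.
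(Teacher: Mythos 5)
Your proposal is correct and coincides with the paper's treatment: the corollary is obtained exactly as you describe, by substituting $k=5$ into Theorems \ref{t1-1} and \ref{t1-2} (so $w_i\in\{0,1\}$, $\lfloor\frac{k-4}{2}\rfloor=0$, $\lceil\frac{k-4}{2}\rceil=1$) and identifying the resulting extremal chains with $D_h$ and $Z_h$. The paper offers no further argument beyond this specialization, so nothing is missing.
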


\begin{corollary}\label{c1-3}{\rm\cite{yasu2022}}
Among all hexagonal chains with given the number of hexagons, the helicene hexagonal chain has the minimum Kirchhoff index.
\end{corollary}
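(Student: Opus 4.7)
The plan is to obtain Corollary \ref{c1-3} as an immediate specialization of Theorem \ref{t1-1} to the case $k=6$. The argument has two short steps: (i) match the $k=6$ instance of the polycyclic-chain terminology to the classical language of hexagonal chains, and (ii) read off Theorem \ref{t1-1}.

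For the first step I would observe that when $k=6$ the number of extra vertices attached to each square $S_i$ is $k-4 = 2$, and the allowed value of each interior parameter is $w_i \in \{0,1,2\}$. These three values correspond exactly to the three standard local attachments of consecutive hexagons in a hexagonal chain (the two "angular" modes and the "linear" mode), so the family $\mathcal{P}_h$ at $k=6$ is precisely the set of hexagonal chains with $h$ hexagons. In this dictionary, the extremal chain $D_h = P(\underbrace{0,0,\ldots,0}_{h-2})$ — where every interior hexagon is attached by placing all additional vertices on the same side — is the helicene hexagonal chain; the isomorphism $P(\underbrace{0,\ldots,0}_{h-2}) \cong P(\underbrace{2,\ldots,2}_{h-2})$ noted in the definition of $D_h$ is exactly the left/right symmetry of the helicene.

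With these identifications, Theorem \ref{t1-1} specialized to $k=6$ states: for every hexagonal chain $G$ with $h$ hexagons, $Kf(G) \geq Kf(D_h)$, with equality if and only if $G \cong D_h$. This is exactly the content of Corollary \ref{c1-3}, so no further computation is needed.

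The main obstacle is therefore not located at the corollary level at all; it is entirely contained in the proof of Theorem \ref{t1-1}, which must handle the comparison of Kirchhoff indices across every possible sequence of parameters $(w_1,\ldots,w_{h-2})$ for general $k \geq 5$. Any direct proof of Corollary \ref{c1-3} would reproduce the $k=6$ slice of that analysis — presumably through standard electrical-network techniques (series-parallel reductions, $\triangle$-$Y$ transformation, the sum rule for resistance distances) together with a local swap showing that replacing an interior hexagon with $w_i = 1$ by one with $w_i = 0$ strictly decreases $Kf$. Since Theorem \ref{t1-1} is assumed available from earlier in the paper, the corollary follows at once.
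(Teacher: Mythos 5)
Your proposal is correct and follows exactly the paper's route: the paper derives Corollary \ref{c1-3} immediately by specializing Theorem \ref{t1-1} to $k=6$, identifying $\mathcal{P}_h$ with the hexagonal chains and $D_h=P(\underbrace{0,\ldots,0}_{h-2})$ with the helicene hexagonal chain, just as you do.
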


\begin{corollary}\label{c1-4}{\rm\cite{yakl2014}}
Among all hexagonal chains with given the number of hexagons, the linear hexagonal chain has the maximum Kirchhoff index.
\end{corollary}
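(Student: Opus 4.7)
The plan is to derive Corollary~\ref{c1-4} as a direct specialization of Theorem~\ref{t1-2} to the case $k=6$. First, I would substitute $k=6$ into the extremal $(h-2)$-tuple that appears in Theorem~\ref{t1-2}. Since $\lfloor(k-4)/2\rfloor=\lceil(k-4)/2\rceil=1$ when $k=6$, that tuple collapses to $(\underbrace{1,1,\ldots,1}_{h-2})$.

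Second, I would match this with the definitions in Section~1.1. Because $k=6$ is even, the linear $k$-polycyclic chain is $L_h=P(\underbrace{(k-4)/2,\ldots,(k-4)/2}_{h-2})$, which at $k=6$ is exactly $P(\underbrace{1,\ldots,1}_{h-2})$. Thus the extremizer identified by Theorem~\ref{t1-2} (in the even-$k$ case) is precisely $L_h$. Invoking the theorem then yields $Kf(G)\le Kf(L_h)$ for every hexagonal chain $G\in\mathcal{P}_h$, with equality if and only if $G\cong L_h$, which is the statement of the corollary.

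Consequently, the corollary itself presents no genuine obstacle; the only bookkeeping is confirming that, for $k=6$, the floor/ceiling expressions agree and point to $L_h$ rather than $Z_h$ (the latter being the extremizer reserved for odd $k$). The real difficulty lives inside Theorem~\ref{t1-2}: one must understand how the Kirchhoff index changes when a single hexagon in the chain is reattached at a different position of the preceding square, which is typically handled by series/parallel reductions, $\Delta$--$Y$ transforms on the underlying electrical network, and a local exchange/induction argument showing that any chain can be transformed into $L_h$ through a sequence of moves that do not decrease $Kf$. Once that machinery is in place, Corollary~\ref{c1-4} follows by the two-step reading above.
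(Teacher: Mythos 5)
Your proposal is correct and matches the paper exactly: the paper obtains Corollary~\ref{c1-4} as an immediate specialization of Theorem~\ref{t1-2} to $k=6$, where $\lfloor\frac{k-4}{2}\rfloor=\lceil\frac{k-4}{2}\rceil=1$ and the extremal tuple $(1,\ldots,1)$ is by definition the linear chain $L_h$. Your reading of the even-$k$ equality case and your remark that all the real work lies in Theorem~\ref{t1-2} are both accurate.
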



\begin{corollary}\label{c1-5}{\rm\cite{maqi2022}}
Among all octagonal chains with given the number of octagons, the helicene octagonal chain (resp. linear octagonal chain) has the minimum (resp. maximum) Kirchhoff index.
\end{corollary}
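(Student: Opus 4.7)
The plan is to obtain Corollary~\ref{c1-5} as an immediate specialization of Theorems~\ref{t1-1} and~\ref{t1-2} to the case $k=8$. Substituting $k=8$ into Theorem~\ref{t1-1} yields $Kf(G)\ge Kf(D_h)$ for every octagonal chain $G\in\mathcal{P}_h$, with equality if and only if $G\cong D_h$, i.e.\ the helicene octagonal chain attains the minimum. Since $8$ is even, the case distinction in Theorem~\ref{t1-2} picks out $L_h$, giving $Kf(G)\le Kf(L_h)$ with equality if and only if $G$ is the linear octagonal chain. Putting the two inequalities together is the entire argument for the corollary, and no $k=8$-specific computation is needed once the two theorems are in hand.

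Because the intellectual content rests entirely on the two main theorems, I also outline how I would approach them. Writing a chain as $P(w)$ with $w\in\{0,1,\ldots,k-4\}^{h-2}$, the natural strategy is a coordinate-swap comparison: show that if $w_i$ is not equal to the extremal value at position $i$ (namely $0$ for Theorem~\ref{t1-1}, and $\lfloor(k-4)/2\rfloor$ or $\lceil(k-4)/2\rceil$ alternately for Theorem~\ref{t1-2}), then replacing $w_i$ by that extremal value strictly decreases, respectively strictly increases, the Kirchhoff index. Iterating drives $w$ to $D_h$ or to $L_h/Z_h$ as required. To carry out a single swap one splits the chain at the two junction vertices of the modified polygon $S_{i+1}$, applies series/parallel reductions inside the polygon, and uses $Kf(G)=\sum_{\{u,v\}}\Omega_G(u,v)$ to express $Kf(P(w))-Kf(P(w'))$ in terms of effective resistances from the two junctions to the remaining vertices of the two subchains.

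The main obstacle will be controlling the cross-terms in this decomposition. The difference of Kirchhoff indices splits naturally into three pieces: a local term coming from pairs inside $S_{i+1}$; a quadratic form in the vector of resistances from each junction vertex to every other vertex, whose coefficients depend on the $2\times 2$ matrix of junction-to-junction resistances of the modified polygon; and a linear term depending on the structure of the two attached subchains. The key lemma will be that this quadratic form has the required sign for every admissible move of $w_i$; once this sign statement is verified, the coordinate-by-coordinate induction runs cleanly through, and Corollary~\ref{c1-5} drops out of the $k=8$ specialization as described in the first paragraph.
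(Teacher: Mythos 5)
Your first paragraph is exactly the paper's argument: Corollary~\ref{c1-5} is obtained by setting $k=8$ in Theorems~\ref{t1-1} and~\ref{t1-2}, with the even-$k$ branch of Theorem~\ref{t1-2} selecting the linear chain $L_h$, and no further computation is required. The additional sketch of how you would prove the two theorems is not needed for this statement (and differs from the paper's actual route via the $S,T$-isomer lemma and $\Delta-Y$ reductions), but the proof of the corollary itself matches the paper.
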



\subsection{Preliminaries}
\hskip 0.6cm
In the following, we introduce some important rules and transformations in an electrical network.
The first is the series connection rule and parallel connection rule.

\textbf{Parallel Connection Rule:}
If $h$ resistors are connected in parallel, then we replace them by a single resistor whose reciprocal of resistance is the sum of $h$ reciprocal of resistances (see Figure \ref{fig-12} (a)).

\textbf{Series Connection Rule:}
If $h$ resistors are connected in series, then we replace them by a single resistor whose resistance is the sum of $h$ resistances (see Figure \ref{fig-12} (b)).


\begin{figure}[ht!]
  \centering
  \scalebox{.18}[.18]{\includegraphics{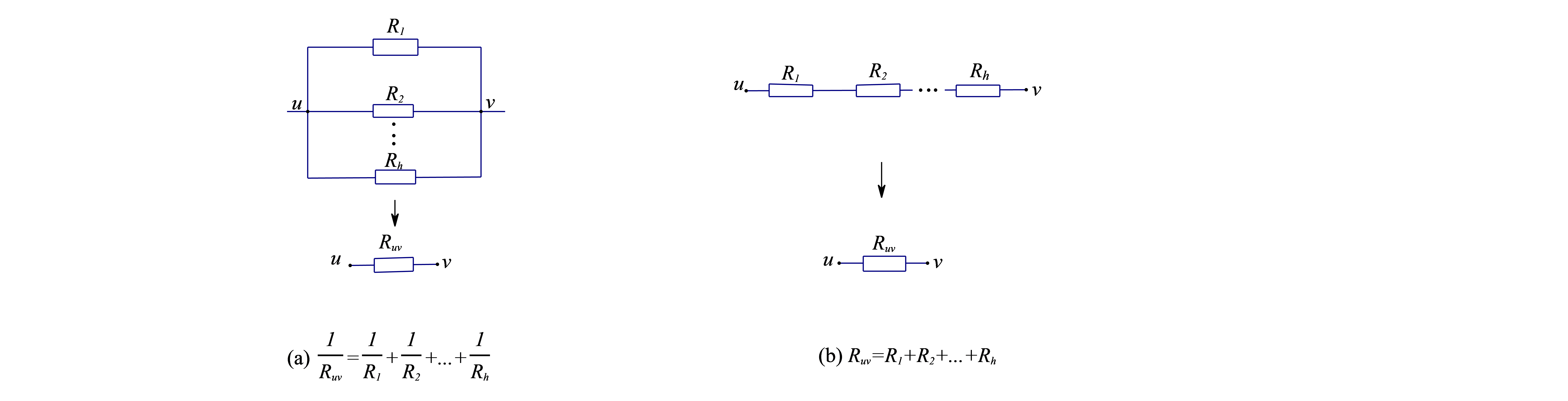}}
  \caption{Illustrations of parallel connection rule and series connection rule.}
 \label{fig-12}
\end{figure}

Now we introduce a transformation between a resistor network $\Delta$ and a resistor network $Y$.
Let $N_{1}$, $N_{2}$ be resistor networks and $V^{*}\subseteq V(N_{1})\cap V(N_{2})$. Then we call $N_{1}$, $N_{2}$ are $V^{*}$-equivalent if $\Omega_{N_{1}}(u,v)=\Omega_{N_{2}}(u,v)$ for any $u,v\in V^{*}$. By $V^{*}$-equivalent, series and parallel connection rule, we have

\begin{proposition}\label{p13-3}{\rm\cite{kenn1899}}
Let $\Delta$ and $Y$ be two resistor networks (see Figure \ref{fig-13}). If $\Delta$ and $Y$ satisfy the following equations:
$$R_{io}=\frac{R_{ij}R_{ik}}{R_{ij}+R_{ik}+R_{jk}},\ R_{jo}=\frac{R_{ij}R_{jk}}{R_{ij}+R_{ik}+R_{jk}},\ R_{ko}=\frac{R_{ik}R_{jk}}{R_{ij}+R_{ik}+R_{jk}},$$
then $\Delta$ and $Y$ are $\{i,j,k\}$-equivalent.
\end{proposition}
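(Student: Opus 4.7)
The plan is to verify the definition of $\{i,j,k\}$-equivalence directly. Since $V^{*}=\{i,j,k\}$ has only three elements, I only need to check equality of resistance distances for the three pairs $(i,j)$, $(i,k)$, $(j,k)$. In both the $\Delta$ and the $Y$ network, these resistances can be computed in closed form using just the Series and Parallel Connection Rules stated above, so no heavier machinery is required.

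First I would handle the $\Delta$ side. Between $i$ and $j$ in $\Delta$ there are exactly two parallel branches: the direct resistor of resistance $R_{ij}$, and the two-edge path $i\!-\!k\!-\!j$ which by the series rule has resistance $R_{ik}+R_{jk}$. Applying the parallel rule gives
$$\Omega_{\Delta}(i,j)=\frac{R_{ij}(R_{ik}+R_{jk})}{R_{ij}+R_{ik}+R_{jk}},$$
and the analogous formulas for $\Omega_{\Delta}(i,k)$ and $\Omega_{\Delta}(j,k)$ follow by the obvious symmetry in the labels.

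Next I would handle the $Y$ side. In $Y$ the only path between any two terminals passes through the central vertex $o$, so the series rule immediately yields
$$\Omega_{Y}(i,j)=R_{io}+R_{jo},\qquad \Omega_{Y}(i,k)=R_{io}+R_{ko},\qquad \Omega_{Y}(j,k)=R_{jo}+R_{ko}.$$
Substituting the three defining expressions for $R_{io},R_{jo},R_{ko}$ in the statement and combining over the common denominator $R_{ij}+R_{ik}+R_{jk}$ reduces, for instance, $R_{io}+R_{jo}$ to $\tfrac{R_{ij}R_{ik}+R_{ij}R_{jk}}{R_{ij}+R_{ik}+R_{jk}}=\tfrac{R_{ij}(R_{ik}+R_{jk})}{R_{ij}+R_{ik}+R_{jk}}$, which matches $\Omega_{\Delta}(i,j)$. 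The other two pairs match by the same algebraic cancellation.

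There is no real obstacle here: the argument is a direct two-line computation on each side followed by term-by-term comparison. The only thing to be careful about is making sure the topologies are read off correctly (the $\Delta$ really is a triangle with no extra internal nodes, and the $Y$ really has $o$ as its unique internal node), since this is what lets the series/parallel rules suffice without invoking Kirchhoff's laws directly. Once the three equalities $\Omega_{\Delta}(u,v)=\Omega_{Y}(u,v)$ for $u,v\in\{i,j,k\}$ are verified, the definition of $V^{*}$-equivalence is satisfied and the proposition follows.
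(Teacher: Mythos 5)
Your verification is correct and follows exactly the route the paper intends: the paper cites Kennelly and prefaces the proposition with ``by $V^{*}$-equivalence, series and parallel connection rule,'' which is precisely your computation of the three pairwise resistances on each side and the term-by-term algebraic comparison. The only point worth stating explicitly is that in the $Y$-network the third branch (e.g.\ $o$--$k$ when computing $\Omega_{Y}(i,j)$) is a pendant dead end carrying no current, which is why the series rule applies despite $o$ having degree three.
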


\begin{definition}\label{d13-3}{\rm\cite{kenn1899}}{\rm($\mathbf{\Delta-Y}$ \textbf{Transformation})}
Let $\Delta$ and $Y$ be two resistor networks (see Figure \ref{fig-13}).
If $\Delta$ and $Y$ satisfy $\{i,j,k\}$-equivalent, then we can transform $\Delta$ to $Y$,
and call it a $\Delta-Y$ transformation.
\end{definition}

Clearly, a $\Delta-Y$ transformation is a technique that change a resistor network $\Delta$ to another equivalent resistor network $Y$.

\begin{figure}[ht!]
  \centering
  \scalebox{.15}[.15]{\includegraphics{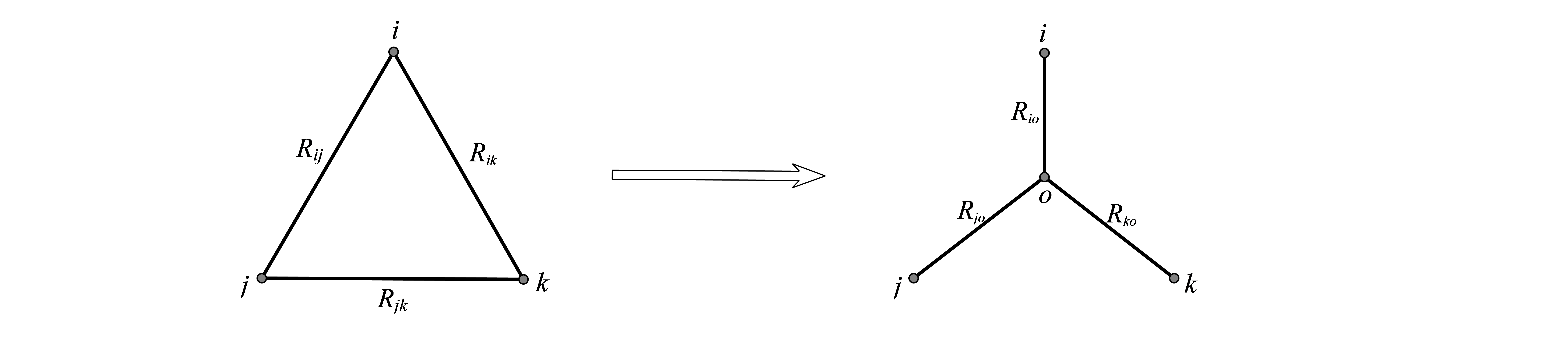}}
  \caption{The graph of $\Delta-Y$ Transformation.}
 \label{fig-13}
\end{figure}

Now we introduce the definition of $S,T$-isomers in organic chemistry.
\begin{definition}\label{p13-4}{\rm\cite{pola1982}}{\rm($\mathbf{S,T}$-\textbf{isomers})}
Let $N_{1}$ and $N_{2}$ be two vertex-disjoint graphs, $u,v\in V(N_{1})$ and $u\neq v$, $x,y\in V(N_{2})$ and $x\neq y$. Let $S$ be the graph obtained from $N_{1}$ and $N_{2}$ by connecting $u$ with $x$, and $v$ with $y$, $T$ be the graph obtained from $N_{1}$ and $N_{2}$ by connecting $u$ with $y$, and $v$ with $x$. Then we call $S$ and $T$ are $S,T$-isomers.
\end{definition}

Figure \ref{fig-14} gives an illustration of $S,T$-isomers and a pair of hexagonal chains as $S,T$-isomers.
Let $\Omega_{G}(u)=\sum\limits_{v\in V(G)\setminus \{u\}}\Omega_{G}(u,v)$.
\begin{figure}[ht!]
  \centering
  \scalebox{.16}[.16]{\includegraphics{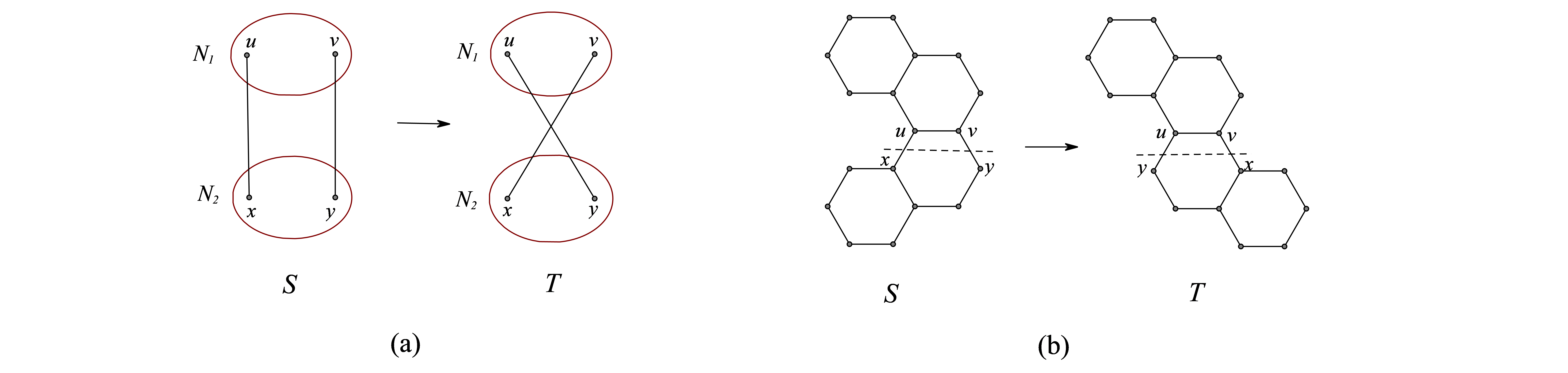}}
  \caption{(a) $S,T$-isomers,\hspace{0.25cm} (b) A pair of hexagonal chains as $S,T$-isomers.}
 \label{fig-14}
\end{figure}

\begin{lemma}\label{l13-5}{\rm\cite{yakl2014}}
Let $S,T,N_{1},N_{2},u,v,x,y$ be defined as in Figure \ref{fig-14}(a). Then
$$Kf(S)-Kf(T)=\frac{(\Omega_{N_{1}}(u)-\Omega_{N_{1}}(v))(\Omega_{N_{2}}(y)-\Omega_{N_{2}}(x))}
{\Omega_{N_{1}}(u,v)+\Omega_{N_{2}}(x,y)+2}.$$
\end{lemma}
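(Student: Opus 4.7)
My plan is to exploit the structure of the $S,T$-isomers relative to the cut provided by the two pairs of connecting edges. Any pair of vertices in $S$ or $T$ lies either entirely in $V(N_{1})$, entirely in $V(N_{2})$, or straddles the cut; I will show the first two classes contribute zero to $Kf(S)-Kf(T)$, and compute the cross contributions explicitly.

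For same-side pairs $\{a,b\}\subseteq V(N_{1})$, observe that in $S$ the subnetwork consisting of $N_{2}$ together with the two connecting edges $ux$ and $vy$ is attached to $N_{1}$ only at the vertices $u$ and $v$. As a two-terminal network between $u$ and $v$, it can be replaced by a single equivalent resistor without changing any resistance distance inside $N_{1}$; applying the series connection rule, the value of that resistor is $1+\Omega_{N_{2}}(x,y)+1$. In $T$ the analogous two-terminal network has effective resistance $1+\Omega_{N_{2}}(y,x)+1$, which is the same number. Hence $\Omega_{S}(a,b)=\Omega_{T}(a,b)$ for every such pair, and by symmetry the same holds for pairs $\{a,b\}\subseteq V(N_{2})$.

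For a mixed pair $a\in V(N_{1})$, $b\in V(N_{2})$ I would reduce $N_{1}$ to a $Y$-star with terminals $a,u,v$ and center $o_{1}$ via Proposition \ref{p13-3}, solving the three linear equations in the statement of the proposition to get arm lengths $r_{ao_{1}},r_{uo_{1}},r_{vo_{1}}$ (they are the usual half-sums $\tfrac{1}{2}(\Omega_{N_{1}}(a,u)+\Omega_{N_{1}}(a,v)-\Omega_{N_{1}}(u,v))$ and cyclic rotations). A parallel reduction of $N_{2}$ gives arms $r_{bo_{2}},r_{xo_{2}},r_{yo_{2}}$. After this reduction the network $S$ between $a$ and $b$ becomes a ladder from $o_{1}$ to $o_{2}$ consisting of two parallel branches through $ux$ and through $vy$, each a series of four resistors, flanked by the arms $r_{ao_{1}}$ and $r_{bo_{2}}$; the analogous reduction of $T$ just swaps the roles of $x$ and $y$ on the $N_{2}$-side. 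Taking the difference, the $r_{ao_{1}}+r_{bo_{2}}$ terms cancel, both parallel-combination denominators equal $(r_{uo_{1}}+r_{vo_{1}})+(r_{xo_{2}}+r_{yo_{2}})+2=\Omega_{N_{1}}(u,v)+\Omega_{N_{2}}(x,y)+2$, and the numerator difference factors via the identity $(A+C)(B+D)-(A+D)(B+C)=(A-B)(D-C)$ as $(r_{uo_{1}}-r_{vo_{1}})(r_{yo_{2}}-r_{xo_{2}})$, which simplifies to $(\Omega_{N_{1}}(a,u)-\Omega_{N_{1}}(a,v))(\Omega_{N_{2}}(b,y)-\Omega_{N_{2}}(b,x))$.

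Finally I would sum this pointwise difference over all mixed pairs. Since $\sum_{a\in V(N_{1})}(\Omega_{N_{1}}(a,u)-\Omega_{N_{1}}(a,v))=\Omega_{N_{1}}(u)-\Omega_{N_{1}}(v)$ by definition of $\Omega_{N_{1}}(\cdot)$, and the $b$-sum similarly produces $\Omega_{N_{2}}(y)-\Omega_{N_{2}}(x)$, the double sum factors into the product claimed in the lemma. The principal obstacle is the algebraic bookkeeping in Step 3: one must carry six $Y$-arm lengths through the parallel combination and keep track of which terms in the numerator difference survive. Everything else is routine application of the series, parallel and $\Delta$--$Y$ rules collected in the preliminaries.
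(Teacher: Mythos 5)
Your proposal is correct, and it checks out in detail (including the sign, which I verified on a small example). Note, however, that the paper itself offers no proof of Lemma \ref{l13-5}---it is imported verbatim from \cite{yakl2014}---so there is no internal argument to compare against; your route (split $Kf(S)-Kf(T)$ into within-$N_{1}$, within-$N_{2}$ and cross pairs, observe that the attachment seen from $\{u,v\}$ has the same two-terminal resistance $\Omega_{N_{2}}(x,y)+2$ in $S$ and $T$ so within-part resistances cancel, reduce each side to an equivalent star with arms given by the half-sum formulas for the cross pairs, and sum) is essentially the derivation in that reference. Two points you should make explicit to close the argument: the reduction of a three-terminal network to a star is a bit more than Proposition \ref{p13-3}, which treats only a literal triangle---one first reduces the network to an equivalent triangle on the terminals (Kron reduction / $V^{*}$-equivalence) and then applies the $\Delta$-$Y$ step, the resulting arms being $\tfrac{1}{2}\bigl(\Omega(a,u)+\Omega(a,v)-\Omega(u,v)\bigr)$ and its rotations; and the degenerate cross pairs with $a\in\{u,v\}$ or $b\in\{x,y\}$ are still covered, since those half-sum arm lengths remain valid (nonnegative, possibly zero) by the triangle inequality of Lemma \ref{l13-6}, so the per-pair identity and hence the final factored summation legitimately include them.
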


Finally, we introduce some qualities about the resistance distance, especially, the triangular inequality.
\begin{lemma}\label{l13-6}{\rm\cite{klra1993}}
The resistance function on a graph is a distance function. Thus for any vertices $a,b,x \in V(G)$, we have

\rm{(i)} $\Omega_{G}(b,a)\geq 0$,

\rm{(ii)} $\Omega_{G}(a,b)=0$ if and only if $a=b$,

\rm{(iii)} $\Omega_{G}(a,b)=\Omega_{G}(b,a)$,

\rm{(iv)} $\Omega_{G}(a,x)+\Omega_{G}(x,b)\geq \Omega_{G}(a,b)$.
\end{lemma}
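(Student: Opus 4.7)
The plan is to work with the physical interpretation of $\Omega_G$ as an effective resistance. For distinct vertices $u,v$, let $\phi_{uv}\colon V(G)\to\mathbb{R}$ denote the voltage function (unique up to an additive constant) produced when one unit of current is injected at $u$ and extracted at $v$, so that $\Omega_G(u,v)=\phi_{uv}(u)-\phi_{uv}(v)$ coincides with the dissipated energy $\sum_{xy\in E(G)}\bigl(\phi_{uv}(x)-\phi_{uv}(y)\bigr)^{2}$. Parts (i)--(iii) then fall out of elementary properties of $\phi_{uv}$, while (iv) requires a linear superposition combined with a discrete maximum principle.

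For (i), non-negativity is immediate from the energy expression. For (ii), if $a=b$ no current flows and $\Omega_G(a,a)=0$; conversely, for distinct $a,b$ in the connected graph $G$ at least one edge must carry nonzero current, so the energy is strictly positive. For (iii), reversing the roles of source and sink gives $\phi_{ba}=-\phi_{ab}$ by linearity of Ohm's and Kirchhoff's laws, whence $\Omega_G(b,a)=\Omega_G(a,b)$.

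The substantive part is (iv). Set $\phi_1=\phi_{ax}$ and $\phi_2=\phi_{xb}$; by linearity the sum $\phi_1+\phi_2$ is precisely the voltage function for one unit of current from $a$ to $b$, since the $\pm 1$ external currents at $x$ cancel and every other vertex equation is homogeneous. Therefore
\[
\Omega_G(a,b)=(\phi_1+\phi_2)(a)-(\phi_1+\phi_2)(b)=\bigl(\phi_1(a)-\phi_1(b)\bigr)+\bigl(\phi_2(a)-\phi_2(b)\bigr).
\]
I then apply the discrete maximum principle to each summand: in $\phi_1$ the global maximum is attained at the source $a$ and the global minimum at the sink $x$, so $\phi_1(b)\geq \phi_1(x)$ and hence $\phi_1(a)-\phi_1(b)\leq \phi_1(a)-\phi_1(x)=\Omega_G(a,x)$; symmetrically, $\phi_2(a)\leq \phi_2(x)$ yields $\phi_2(a)-\phi_2(b)\leq \Omega_G(x,b)$. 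Adding the two bounds gives (iv).

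The main obstacle is supplying the two classical facts used above: the existence and uniqueness (up to constants) of $\phi_{uv}$, and the discrete maximum principle for currents with a single source and a single sink. Both reduce to the statement that on a connected graph the kernel of the graph Laplacian consists of constant functions, so the reduced Laplacian (obtained by grounding any single vertex) is invertible, and any function that is harmonic at an interior vertex is a convex combination of its neighbors' values, forcing extrema to occur only where external current enters or leaves. I would cite these standard electrical-network results rather than reprove them here.
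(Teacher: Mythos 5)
The paper does not prove this lemma at all: it is imported verbatim from Klein and Randi\'{c} \cite{klra1993} as a known fact, so there is no in-paper argument to compare yours against. Judged on its own, your proof is the standard electrical-network derivation and it is correct. Parts (i)--(iii) are handled properly via the energy identity $\Omega_G(u,v)=\sum_{xy\in E(G)}(\phi_{uv}(x)-\phi_{uv}(y))^{2}$ and the antisymmetry $\phi_{ba}=-\phi_{ab}$, with connectivity correctly invoked for the strict positivity in (ii). The superposition step in (iv) is the key idea and you state it accurately: the external currents of $\phi_{ax}$ and $\phi_{xb}$ at $x$ cancel, so $\phi_{ax}+\phi_{xb}$ is the unit $a$-to-$b$ flow, and the discrete maximum principle (extrema of a function harmonic off the source and sink occur only at the source and sink) gives $\phi_{ax}(b)\geq\phi_{ax}(x)$ and $\phi_{xb}(a)\leq\phi_{xb}(x)$, which is exactly what is needed. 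Two small housekeeping points you may want to make explicit if you write this up in full: the degenerate cases of (iv) where $a$, $b$, $x$ are not pairwise distinct should be dispatched separately (they follow trivially from (i) and (ii), since the voltage function $\phi_{uu}$ is not defined by the current-injection setup), and the facts you defer to citation --- invertibility of the reduced Laplacian and the maximum principle --- are indeed standard and legitimately citable, so deferring them is fine. Note also that Klein and Randi\'{c}'s own published proof goes through the Moore--Penrose inverse of the Laplacian rather than superposition of flows; your route is the more physical and arguably more elementary one, at the cost of invoking the maximum principle.
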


\section{Proof of Theorems \ref{t1-1} and \ref{t1-2}}
\hskip 0.6cm
Let $P(w)$ be a weighted $k$-polycyclic chain with $h$ $k$-polygons and $w=(w_{1},w_{2},\cdots,w_{h-2})$, where $0\leq w_{i}\leq k-4$ for $1\leq i\leq h-2$. Suppose the $k$-polygons in $k$-polycyclic chain are $C_{1},C_{2},\cdots,C_{h}$ in order. Let the top (resp. bottom) common vertices of $C_{i}$ and
$C_{i+1}$ are $a_{i}$ (resp. $b_{i}$) for $i=1,2,\cdots,h-1$.

\begin{figure}[ht!]
  \centering
  \scalebox{.16}[.16]{\includegraphics{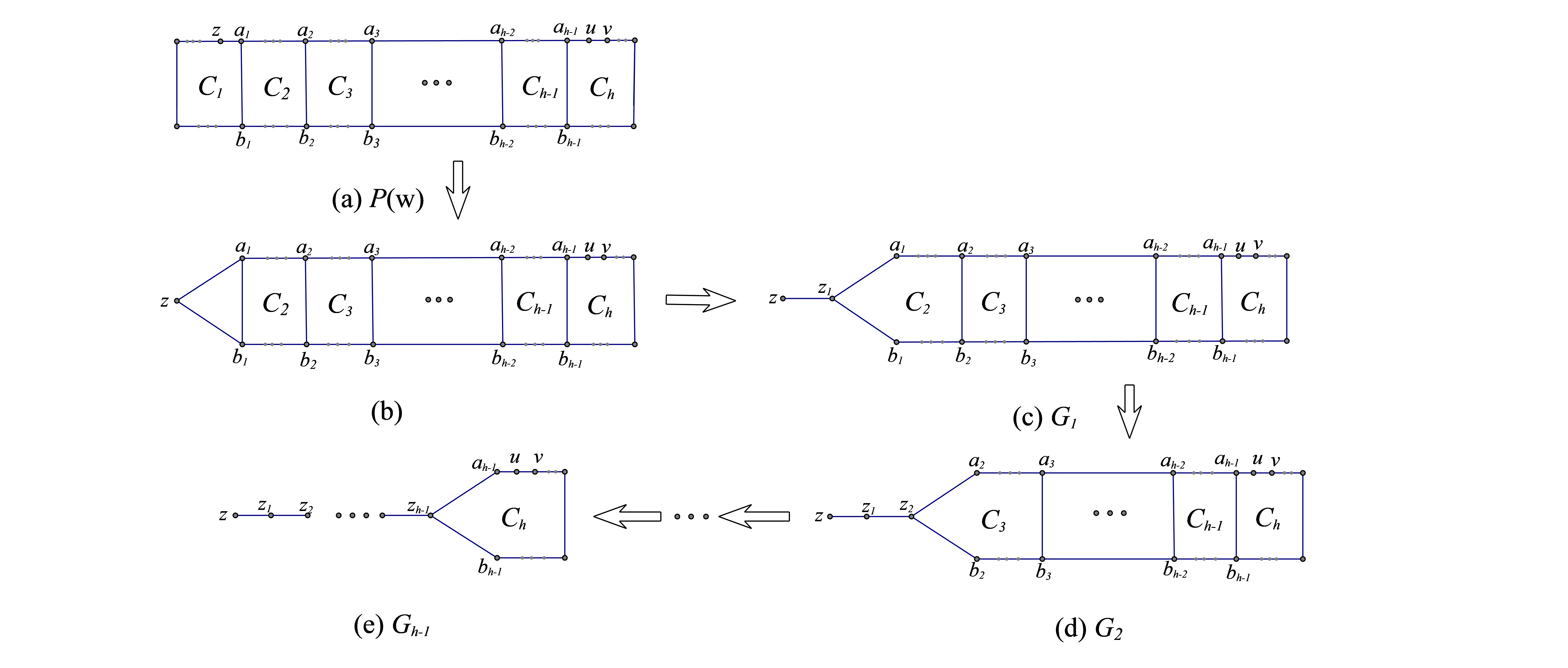}}
  \caption{Illustration of the transformation from $P(w)$ to $G_{h-1}$ in Lemma \ref{l2-1}.}
 \label{fig-21}
\end{figure}

\begin{lemma}\label{l2-1}
Let $P(w)$ be a weighted $k$-polycyclic chain $($$k\geq 5$$)$ and the weight of edges in $C_{h}$ is $1$. Let $u,v\in V(C_{h})$ with $d_{P(w)}(u)=d_{P(w)}(v)=2$, $ua_{h-1}\in E(P(w))$ and $uv\in E(P(w))$. Then for any $z\in V(C_{1})\setminus \{a_{1},b_{1}\}$, we have $\Omega_{P(w)}(z,u)<\Omega_{P(w)}(z,v)$.
\end{lemma}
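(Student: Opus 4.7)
The plan is to absorb the part of $P(w)$ outside $C_h$ into an equivalent Y-star (via Proposition~\ref{p13-3}), simplify $P(w)$ to a small network, and then compare the two desired resistances directly. I would first let $N=P(w)-(V(C_h)\setminus\{a_{h-1},b_{h-1}\})$ be the subnetwork obtained by deleting the interior of $C_h$; it contains $C_1,\dots,C_{h-1}$, and in particular the shared edge $a_{h-1}b_{h-1}$ of weight $1$. Regarding $N$ as a three-terminal network at $\{z,a_{h-1},b_{h-1}\}$ and applying Proposition~\ref{p13-3}, I replace it by a $\{z,a_{h-1},b_{h-1}\}$-equivalent Y with center $c$ and arm resistances
\[
r_z=\tfrac{1}{2}(\alpha+\beta-\gamma),\quad r_a=\tfrac{1}{2}(\alpha+\gamma-\beta),\quad r_b=\tfrac{1}{2}(\beta+\gamma-\alpha),
\]
where $\alpha=\Omega_N(z,a_{h-1})$, $\beta=\Omega_N(z,b_{h-1})$, $\gamma=\Omega_N(a_{h-1},b_{h-1})$; these are nonnegative by Lemma~\ref{l13-6}(iv).

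Next I would carry out two series reductions. Since $z$ is now pendant and attached only to $c$, the series rule gives $\Omega(z,x)=r_z+\Omega(c,x)$ for every $x\in V(C_h)$, so the claim reduces to $\Omega(c,u)<\Omega(c,v)$. The interior of $C_h$ (that is, $C_h$ minus the shared edge) is a unit-weight path $a_{h-1}\,u\,v\,w_1\cdots b_{h-1}$ of length $k-1$; collapsing the degree-$2$ subpath from $v$ to $b_{h-1}$ by the series rule yields a single resistor of resistance $p:=k-3$. What remains is a five-vertex network with edges of resistance $r_a$ (between $c$ and $a_{h-1}$), $r_b$ (between $c$ and $b_{h-1}$), $1$ (between $a_{h-1}$ and $u$, and between $u$ and $v$), and $p$ (between $v$ and $b_{h-1}$); crucially, there is no direct $a_{h-1}b_{h-1}$ resistor, because the shared edge was absorbed into the Y.

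On this small network I would inject a unit current at $c$, extract it at $u$ (respectively at $v$), and solve Kirchhoff's voltage law around the unique independent loop. A short calculation yields
\[
\Omega(c,v)-\Omega(c,u)=\frac{(k-4)-(r_a-r_b)}{r_a+r_b+k-1}.
\]
To see that this is strictly positive, note that $r_a-r_b=\alpha-\beta$, so Lemma~\ref{l13-6}(iv) in $N$ gives $r_a-r_b\le\gamma$; the unit-weight shared edge gives $\gamma\le 1$; and since $h\ge 2$, the cycle $C_{h-1}$ supplies a parallel $a_{h-1}$-to-$b_{h-1}$ path, so by the parallel rule $\gamma<1\le k-4$. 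Hence the numerator is positive, which gives $\Omega_{P(w)}(z,u)<\Omega_{P(w)}(z,v)$.

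The step I expect to require the most care is the Y-reduction at the outset: I need to justify that after replacing the three-terminal network $N$ by its Y-equivalent, the resistance distances from $z$ to the externally attached vertices $u,v\in V(C_h)$ are truly preserved, not merely the pairwise resistances among $\{z,a_{h-1},b_{h-1}\}$ themselves (this is the standard port-equivalence for three-terminal networks, but deserves a sentence). The ensuing Kirchhoff computation is routine but demands careful sign bookkeeping, and the concluding inequality is a clean application of the triangle inequality together with the parallel connection rule.
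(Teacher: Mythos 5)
Your proof is correct and reaches exactly the paper's final comparison, but by a noticeably different reduction, so a comparison is worthwhile. The paper marches down the chain: it repeatedly applies the series rule and the $\Delta-Y$ transformation of Proposition \ref{p13-3} to $C_{1},C_{2},\dots,C_{h-1}$, ending with a star attached at $a_{h-1},b_{h-1}$ whose arm weights satisfy $0<\theta_{1}<1$, $0<\theta_{2}<1$ as a byproduct of the recursion; the last ring is then handled by the same series--parallel computation you perform, giving $\Omega_{P(w)}(z,u)-\Omega_{P(w)}(z,v)=\frac{\theta_{1}-\theta_{2}-k+4}{\theta_{1}+\theta_{2}+k-1}<0$, which is the negative of your expression with $(\theta_{1},\theta_{2})=(r_{a},r_{b})$. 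You instead collapse everything outside the interior of $C_{h}$ in one shot into a three-terminal star with arms $\tfrac{1}{2}(\alpha+\beta-\gamma)$, etc., and you close the sign analysis with $r_{a}-r_{b}=\alpha-\beta\le\gamma<1\le k-4$, using Lemma \ref{l13-6}(iv) and the parallel rule (the strict inequality $\gamma<1$, needed precisely when $k=5$, is correctly extracted from the parallel $a_{h-1}$--$b_{h-1}$ path through $C_{h-1}$). What your route buys: it treats every $z\in V(C_{1})\setminus\{a_{1},b_{1}\}$ and arbitrary positive weights outside $C_{h}$ uniformly, whereas the paper computes only for the neighbour of $a_{1}$ (its Step 1 also tacitly assumes unit weights on $C_{1}$) and dismisses the remaining cases as similar; this uniformity is convenient for the weighted reductions used later in Lemma \ref{l2-3}. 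What it costs: the one-shot replacement is not literally Proposition \ref{p13-3}; you must invoke the standard port-equivalence of three-terminal networks, namely that matching the three pairwise effective resistances at $\{z,a_{h-1},b_{h-1}\}$ guarantees that resistances from $z$ to the externally attached vertices $u,v$ are preserved after the star substitution. You flag this yourself, and it does deserve the promised sentence, though it is a justification debt shared with the paper, which implicitly relies on the same principle each time it performs a $\Delta-Y$ step inside the larger graph; with that sentence added, your argument is complete.
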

\begin{proof}
Firstly, we suppose that $z$ is the vertex adjacent to $a_{1}$ in $C_{1}$.
Now we show $\Omega_{P(w)}(z,u)<\Omega_{P(w)}(z,v)$.

Step 1: Replace the path with length $k-1$ from $z$ to $b_{1}$ (do not pass $a_{1}$) by an edge $zb_{1}$ with weight $k-1$, we can obtain the graph as Figure \ref{fig-21} (b).

Step 2: Translate the $\Delta$-network $za_{1}b_{1}$ to a $Y$-network with center $z_{1}$, we can obtain the graph $G_{1}$ as Figure \ref{fig-21} (c).

Step 3: Repeat steps $1$ and $2$, we can obtain the graph $G_{2}$ as Figure \ref{fig-21} (d).

$\vdots$

Step $h$: Repeat steps $1$ and $2$, we can finally obtain the graph $G_{h-1}$ as Figure \ref{fig-21} (e).

Let the weight of $z_{h-1}a_{h-1}$, $z_{h-1}b_{h-1}$ in $G_{h-1}$ be $\theta_{1}$ and $\theta_{2}$, respectively. Since the weight of $a_{h-1}b_{h-1}$ is $1$, by the Proposition \ref{p13-3}, it is easy to know that $0<\theta_{1}<1$ and $0<\theta_{2}<1$.

By the series connection rule and parallel connection rule, we have
$\frac{1}{\Omega_{G_{h-1}}(z_{h-1},u)}=\frac{1}{\theta_{1}+1}+\frac{1}{\theta_{2}+k-2}$,
$\frac{1}{\Omega_{G_{h-1}}(z_{h-1},v)}=\frac{1}{\theta_{1}+2}+\frac{1}{\theta_{2}+k-3}$,
and
$$\Omega_{P(w)}(z,u)=\Omega_{G_{h-1}}(z,u)=\Omega_{G_{h-1}}(z,z_{h-1})+\frac{(\theta_{1}+1)
(\theta_{2}+k-2)}{\theta_{1}+\theta_{2}+k-1},$$
$$\Omega_{P(w)}(z,v)=\Omega_{G_{h-1}}(z,v)=\Omega_{G_{h-1}}(z,z_{h-1})+\frac{(\theta_{1}+2)
(\theta_{2}+k-3)}{\theta_{1}+\theta_{2}+k-1}.$$

Thus
$$\Omega_{P(w)}(z,u)-\Omega_{P(w)}(z,v)=\frac{\theta_{1}-
\theta_{2}-k+4}{\theta_{1}+\theta_{2}+k-1}<0.$$

For any other vertex $z'\in V(C_{1})\setminus \{a_{1},b_{1}\}$, we can prove similarly.
This completes the proof.
\end{proof}

With the similar proof, we can strengthen the conclusion of Lemma \ref{l2-1}.
\begin{lemma}\label{l2-2}
Let $P(w)$ be a weighted $k$-polycyclic chain $($$k\geq 5$$)$ and the weight of edges in $C_{h}$ is $1$. Let $u,v\in V(C_{h})$ with $d_{P(w)}(u)=d_{P(w)}(v)=2$, $uv\in E(P(w))$, $d_{P(w)}(a_{h-1},u)<d_{P(w)}(a_{h-1},v)$ or $d_{P(w)}(b_{h-1},u)<d_{P(w)}(b_{h-1},v)$. Then for any $z\in V(C_{1})\setminus \{a_{1},b_{1}\}$, we have $\Omega_{P(w)}(z,u)<\Omega_{P(w)}(z,v)$.
\end{lemma}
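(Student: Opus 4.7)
The plan is to run the same reduction used in the proof of Lemma \ref{l2-1} and then generalize the concluding algebra to an arbitrary adjacent interior pair in $C_h$. I first assume $z$ is the neighbor of $a_1$ in $C_1$, handling the remaining $z \in V(C_1) \setminus \{a_1,b_1\}$ by the identical argument with the reduction started from the edge incident to $z$. Iterating the contract-the-long-path / $\Delta$-$Y$ procedure of Lemma \ref{l2-1} through the polygons $C_1, C_2, \ldots, C_{h-1}$ produces the network $G_{h-1}$ in which $z_{h-1}$ is joined to $a_{h-1}$ and $b_{h-1}$ by edges of weights $\theta_1, \theta_2 \in (0,1)$, while the cycle $C_h$, its shared edge having been absorbed by the last $\Delta$-$Y$, persists as a unit-resistance path $a_{h-1} = x_0 - x_1 - \cdots - x_{k-2} - x_{k-1} = b_{h-1}$.

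Since $u,v$ have degree two and are adjacent in $P(w)$, they are consecutive interior vertices on this path: write $u = x_s$ and $v = x_{s+1}$ for some $s \in \{1,\ldots,k-3\}$. Invoking the $a_{h-1}\leftrightarrow b_{h-1}$ symmetry of what follows (which simply swaps $\theta_1\leftrightarrow\theta_2$), I reduce to the case in which the realized half of the hypothesis is $d_{P(w)}(a_{h-1},u) < d_{P(w)}(a_{h-1},v)$; this pins $s \le \lfloor(k-2)/2\rfloor$. The parallel rule applied to the two $z_{h-1}$-to-$x_t$ paths in $G_{h-1}$ (one through $a_{h-1}$ with resistance $\theta_1 + t$, one through $b_{h-1}$ with resistance $\theta_2 + k - 1 - t$) yields
\[
\Omega_{G_{h-1}}(z_{h-1}, x_t) = \frac{(\theta_1+t)(\theta_2+k-1-t)}{\theta_1+\theta_2+k-1},
\]
and, using $\Omega_{P(w)}(z,\cdot) = \Omega_{G_{h-1}}(z, z_{h-1}) + \Omega_{G_{h-1}}(z_{h-1},\cdot)$, subtracting the $t = s$ and $t = s+1$ instances simplifies to
\[
\Omega_{P(w)}(z,u) - \Omega_{P(w)}(z,v) = \frac{\theta_1 - \theta_2 + 2s - k + 2}{\theta_1 + \theta_2 + k - 1}.
\]
For $s = 1$ this reproduces exactly the formula of Lemma \ref{l2-1}.

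The denominator is positive, so the claim reduces to $\theta_1 - \theta_2 + 2s - k + 2 < 0$. Whenever $2s \le k - 3$ — which covers the entire admissible range for $k$ odd, and all values except one for $k$ even — the trivial bound $2s - k + 2 \le -1$ combines with $\theta_1 - \theta_2 < 1$ to give strict negativity immediately. The one delicate case, and the expected main obstacle, is $k$ even with $s = (k-2)/2$: the numerator collapses to $\theta_1 - \theta_2$ and the conclusion requires extracting a sign from the reduction itself. I would handle it by tracking through each iteration a monotonicity relating the side of $C_1$ on which $z$ sits to the sign of $\theta_1 - \theta_2$ produced in $G_{h-1}$, so that the hypothesis (forcing a unique labeling of $u,v$ in that borderline case) delivers the required inequality. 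The remaining vertices $z \in V(C_1) \setminus \{a_1,b_1\}$ are then disposed of by the same argument with the reduction initiated at the appropriate edge of $C_1$.
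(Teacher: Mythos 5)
Your reduction, the formula $\Omega_{P(w)}(z,u)-\Omega_{P(w)}(z,v)=\frac{\theta_1-\theta_2+2s-k+2}{\theta_1+\theta_2+k-1}$, and the bound $0<\theta_1,\theta_2<1$ are correct, and this is exactly the ``similar proof'' the paper intends (it gives no details for Lemma \ref{l2-2} beyond saying it follows as Lemma \ref{l2-1} does); after your $a_{h-1}\leftrightarrow b_{h-1}$ reduction the hypothesis does pin $s\le\lfloor\frac{k-2}{2}\rfloor$, so your argument is complete whenever $2s\le k-3$, in particular for all odd $k$ (the only range in which the paper actually invokes Lemma \ref{l2-2}, via Lemma \ref{l2-4}, in the odd-$k$ part of the proof of Theorem \ref{t1-2}).

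The gap is the borderline case $k$ even, $s=\frac{k-2}{2}$, and your proposed repair cannot work, for two concrete reasons. First, your parenthetical claim that the hypothesis ``forces a unique labeling'' there is false: for the middle pair $u=x_{(k-2)/2}$, $v=x_{k/2}$ one has $d_{P(w)}(a_{h-1},u)=\frac{k-2}{2}<\frac{k}{2}=d_{P(w)}(a_{h-1},v)$ and simultaneously $d_{P(w)}(b_{h-1},v)=\frac{k-2}{2}<\frac{k}{2}=d_{P(w)}(b_{h-1},u)$, so both orderings of this pair satisfy the ``or''-hypothesis. Second, the sign of $\theta_1-\theta_2$ genuinely depends on which side of $C_1$ the vertex $z$ sits, so no inequality between $\Omega_{P(w)}(z,u)$ and $\Omega_{P(w)}(z,v)$ can hold uniformly over all $z\in V(C_1)\setminus\{a_1,b_1\}$ in this tied case: already for $h=2$, $k=6$, your own reduction gives $\theta_1<\theta_2$ when $z$ is the neighbour of $a_1$ (a single $\Delta$--$Y$ step with triangle resistances $1$, $1$ and $k-2$), hence $\Omega(z,x_2)<\Omega(z,x_3)$, while the mirror choice of $z$ adjacent to $b_1$ gives the reverse strict inequality; yet the labeling $u=x_3$, $v=x_2$ satisfies the stated hypothesis through the $b_{h-1}$-disjunct. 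So the symmetric middle pair must be excluded from the statement (equivalently, the hypothesis has to be read as saying $u$ is strictly closer to the attachment edge with no tie on the other side), not proved; ``tracking a monotonicity'' of $\theta_1-\theta_2$ through the iterations can only yield a $z$-dependent conclusion, which is weaker than what the lemma asserts. Apart from this case, your argument is sound and matches the paper's intended route.
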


\begin{lemma}\label{l2-3}
Let $P(w)$ be a $k$-polycyclic chain \rm{($k\geq 5$)}. Let $u,v\in V(C_{h})$ with $d_{P(w)}(u)=d_{P(w)}(v)=2$, $ua_{h-1}\in E(P(w))$ and $uv\in E(P(w))$. Then we have $\Omega_{P(w)}(u)<\Omega_{P(w)}(v)$.
\end{lemma}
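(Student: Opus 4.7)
The plan is to decompose
\[
\Omega_{P(w)}(u) - \Omega_{P(w)}(v) = \sum_{z \in V(P(w)) \setminus \{u,v\}} \bigl[\Omega_{P(w)}(u,z) - \Omega_{P(w)}(v,z)\bigr]
\]
and to split the sum according to whether $z$ lies in $V(C_h)$ or not. The strategy is to force every off-$C_h$ term to be strictly negative by reducing to Lemma~\ref{l2-2}, and to evaluate the $C_h$-contribution in closed form, showing it too is strictly negative for $k \geq 5$.

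For $z \notin V(C_h)$, I would first partition $V(P(w)) \setminus V(C_h) = \bigsqcup_{i=1}^{h-1}(V(C_i) \setminus \{a_i, b_i\})$. For each $z \in V(C_i) \setminus \{a_i, b_i\}$, I would collapse the prefix $C_1 \cup \cdots \cup C_{i-1}$ into an effective resistor across $\{a_{i-1}, b_{i-1}\}$ by iterated series/parallel and $\Delta$-$Y$ reductions, and place it in parallel with the unit left edge $a_{i-1} b_{i-1}$ of $C_i$. The outcome is a weighted $k$-polycyclic chain of length $h-i+1$ whose first polygon is $C_i$ (with a single reweighted left edge) and whose last polygon is $C_h$ with all unit weights; all adjacencies and degrees of $z$, $u$, $v$, $a_{h-1}$, $b_{h-1}$ are preserved. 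Since $d(u) = d(v) = 2$, $uv \in E$, and $d(a_{h-1}, u) = 1 < 2 = d(a_{h-1}, v)$, Lemma~\ref{l2-2} applies to this reduced chain and yields $\Omega_{P(w)}(u, z) < \Omega_{P(w)}(v, z)$.

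For $z \in V(C_h) \setminus \{u, v\}$, I would collapse the entire subgraph $C_1 \cup \cdots \cup C_{h-1}$ into one effective resistor $R^*$ across $\{a_{h-1}, b_{h-1}\}$. Because the unit shared edge $a_{h-1} b_{h-1}$ is itself one of the parallel resistors in that subgraph (and at least one additional finite path exists through the rest of $C_{h-1}$), we have $R^* < 1$. The reduced network is then simply a $k$-cycle with one edge of weight $R^*$ and $k-1$ unit edges. The parallel-connection rule gives $\Omega(x, z) = W_x^+(z) W_x^-(z)/(W_x^+(z) + W_x^-(z))$ for $x \in \{u, v\}$, where $W_x^\pm(z)$ denote the weights of the two arcs from $x$ to $z$ around the cycle. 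Using the relations $W_v^+ = W_u^+ - 1$, $W_v^- = W_u^- + 1$, together with the constant total $W_u^+ + W_u^- = k - 1 + R^*$, a direct summation over the $k - 2$ vertices in $V(C_h) \setminus \{u, v\}$ reduces to the closed form
\[
\sum_{z \in V(C_h) \setminus \{u, v\}} \bigl[\Omega(u, z) - \Omega(v, z)\bigr] = \frac{(R^* - 1)(k - 4)}{k - 1 + R^*},
\]
which is strictly negative because $R^* < 1$ and $k \geq 5$.

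Combining the two regimes yields $\Omega_{P(w)}(u) < \Omega_{P(w)}(v)$. The main obstacle I anticipate is the first step: carefully verifying that the partial $\Delta$-$Y$/series-parallel reduction produces a bona fide weighted polycyclic chain meeting every hypothesis of Lemma~\ref{l2-2}, in particular that all edges of $C_h$ retain unit weight and that $a_{h-1}, b_{h-1}$ are not swallowed up by the reductions. The cycle computation in the second step and the bound $R^* < 1$ are routine once the reduction is set up.
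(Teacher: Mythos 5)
Your proposal is correct and follows essentially the same route as the paper: for vertices outside $C_h$ you collapse the preceding polygons by series/parallel (and $\Delta$-$Y$) reduction into a reweighted edge $a_{i-1}b_{i-1}$ of weight less than $1$ and invoke Lemma \ref{l2-2} (the paper invokes Lemma \ref{l2-1}, which already suffices since $ua_{h-1}\in E(P(w))$), and for vertices of $C_h$ you reduce to a weighted $k$-cycle with one edge of weight $r<1$ and compute the aggregate difference $\frac{(k-4)(r-1)}{r+k-1}<0$, exactly the paper's Case 3. The closed form and the bound $R^{*}<1$ check out, so no gap.
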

\begin{proof}
We complete the proof by the following three cases.

{\bf Case 1}. $z\in V(C_{1})\setminus \{a_{1},b_{1}\}$.

By Lemma \ref{l2-1}, we have $\Omega_{P(w)}(z,u)<\Omega_{P(w)}(z,v)$.

{\bf Case 2}. $z\in V(C_{i})$ for $2\leq i\leq h-1$.

By the series connection rule and parallel connection rule, we can simply $P(w)$ to a weighted $k$-polycyclic chain which consists of $k$-polygons $C_{i},C_{i+1},\cdots,C_{h}$ such that the weight of edge $a_{i-1}b_{i-1}$ is less that $1$ and the weight of all other edges are $1$.
They by Lemma \ref{l2-1}, we have $\Omega_{P(w)}(z,u)<\Omega_{P(w)}(z,v)$.

{\bf Case 3}. $z\in V(C_{h})$.

By the series connection rule and parallel connection rule, we can simply $P(w)$ to a weighted $k$-polygons such that the weight of edge $a_{h-1}b_{h-1}$ is $r(<1)$ and the weight of all other edges are $1$. Then by
$\sum\limits_{z\in V(C_{h})}\Omega_{P(w)}(z,u)=\Omega_{P(w)}(b_{h-1},u)+\Omega_{P(w)}(a_{h-1},u)+\sum\limits_{z\in V(C_{h})\setminus\{a_{h-1},b_{h-1} \}}\Omega_{P(w)}(z,u)$
 \hspace{0.10cm} and
$\sum\limits_{z\in V(C_{h})}\Omega_{P(w)}(z,v)=\Omega_{P(w)}(a_{h-1},v)+\Omega_{P(w)}(u,v)+\sum\limits_{z\in V(C_{h})\setminus\{a_{h-1},u \}}\Omega_{P(w)}(z,v)$, we have
$$\sum_{z\in V(C_{h})}\Omega_{P(w)}(z,u)=\frac{(r+1)(k-2)}{r+k-1}+\frac{r+k-2}{r+k-1}+\sum_{i=1}^{k-3}\frac{i(r+k-i-1)}{r+k-1}
,$$
$$\sum_{z\in V(C_{h})}\Omega_{P(w)}(z,v)=\frac{2(r+k-3)}{r+k-1}+\frac{r+k-2}{r+k-1}+\sum_{i=1}^{k-3}\frac{i(r+k-i-1)}{r+k-1}
.$$

Thus
$$\sum_{z\in V(C_{h})}\Omega_{P(w)}(z,u)-\sum_{z\in V(C_{h})}\Omega_{P(w)}(z,v)=\frac{(k-4)(r-1)}{r+k-1}<0.$$

This completes the proof.
\end{proof}

With the similar proof, we can strengthen the conclusion of Lemma \ref{l2-3}.
\begin{lemma}\label{l2-4}
Let $P(w)$ be a $k$-polycyclic chain $($$k\geq 5$$)$. Let $u,v\in V(C_{h})$ with $uv\in E(P(w))$, $d_{P(w)}(u)=d_{P(w)}(v)=2$, $d_{P(w)}(a_{h-1},u)<d_{P(w)}(a_{h-1},v)$ or $d_{P(w)}(b_{h-1},u)<d_{P(w)}(b_{h-1},v)$. Then we have $\Omega_{P(w)}(u)<\Omega_{P(w)}(v)$.
\end{lemma}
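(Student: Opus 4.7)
The plan is to reproduce the three-case split in the proof of Lemma \ref{l2-3}, replacing each appeal to Lemma \ref{l2-1} by the stronger Lemma \ref{l2-2}. Writing
\[ \Omega_{P(w)}(u)-\Omega_{P(w)}(v)=\sum_{z\notin\{u,v\}}\bigl(\Omega_{P(w)}(z,u)-\Omega_{P(w)}(z,v)\bigr), \]
I separate the sum according to which polygon contains $z$: (i) $z\in V(C_1)\setminus\{a_1,b_1\}$, (ii) $z\in V(C_j)$ for some $2\le j\le h-1$, and (iii) $z\in V(C_h)$.

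Cases (i) and (ii) transfer directly. In case (i), Lemma \ref{l2-2} applies verbatim and yields $\Omega_{P(w)}(z,u)<\Omega_{P(w)}(z,v)$ for every such $z$. In case (ii), following the recipe in Lemma \ref{l2-3}, I first compress $C_1,\dots,C_{j-1}$ into a single weighted edge on $a_{j-1}b_{j-1}$ of weight strictly less than $1$ via iterated series--parallel and $\Delta$--$Y$ reductions, and then invoke Lemma \ref{l2-2} on the resulting smaller weighted chain to obtain the termwise strict inequality.

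The genuinely new work is in case (iii). I would collapse $C_1,\dots,C_{h-1}$ into a single weighted edge on $a_{h-1}b_{h-1}$ of weight $r<1$, turning $C_h$ into a $k$-cycle whose edges carry weights $(r,1,1,\dots,1)$. Labelling the long arc as $a_{h-1}=x_0,x_1,\dots,x_{k-1}=b_{h-1}$, the parallel rule gives $\Omega(x_p,x_q)=|p-q|(r+k-1-|p-q|)/(r+k-1)$; with $u=x_i$ and $v=x_{i+1}$, the sum $\sum_{z\in V(C_h)}[\Omega(z,u)-\Omega(z,v)]$ then telescopes to the closed form $(k-2-2i)(r-1)/(r+k-1)$, which reproduces $(k-4)(r-1)/(r+k-1)$ from Lemma \ref{l2-3} at $i=1$. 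I expect the main obstacle to be reading off the sign correctly: since $r-1<0$, the expression is non-positive iff $k-2-2i\ge 0$, so I have to exploit the hypothesis---together with the $a_{h-1}\leftrightarrow b_{h-1}$ symmetry of the weighted $k$-cycle---to arrange, without loss of generality, that $u$ is the vertex closer to $a_{h-1}$, which pins $i$ to the range $i\le (k-2)/2$ and thus makes case (iii) non-positive. Combining the non-positive case (iii) with the strictly negative contributions from cases (i) and (ii) (which are present whenever $h\ge 2$) delivers $\Omega_{P(w)}(u)<\Omega_{P(w)}(v)$, as required.
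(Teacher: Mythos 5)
Your route is exactly the one the paper intends: the paper gives no proof of Lemma \ref{l2-4} beyond the remark that it is ``similar'' to Lemma \ref{l2-3}, i.e.\ the same three-way split over the polygon containing $z$, with Lemma \ref{l2-2} replacing Lemma \ref{l2-1} in the first two cases and a weighted-cycle computation in the last. Your case (iii) closed form is correct: collapsing $C_1,\dots,C_{h-1}$ to an edge $a_{h-1}b_{h-1}$ of weight $r<1$ and writing $u=x_i$, $v=x_{i+1}$ on the arc $a_{h-1}=x_0,\dots,x_{k-1}=b_{h-1}$, one indeed gets $\sum_{z\in V(C_h)}\bigl(\Omega(z,u)-\Omega(z,v)\bigr)=(k-2-2i)(r-1)/(r+k-1)$, matching the paper's $(k-4)(r-1)/(r+k-1)$ at $i=1$; and the hypothesis does pin $i\le\lfloor\frac{k-2}{2}\rfloor$ in your normalization. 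For $k$ odd the hypothesis in fact forces $i\le\frac{k-3}{2}$, so case (iii) is strictly negative and your argument is complete; this is the only regime in which the paper actually uses Lemma \ref{l2-4}.

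The one step that does not survive scrutiny is the boundary you yourself flag: $k$ even and $i=\frac{k-2}{2}$ (the two ``middle'' vertices of the terminal arc). There the hypothesis, as stated with ``or'', is still satisfied, since $d(a_{h-1},u)=\frac{k-2}{2}<\frac{k}{2}=d(a_{h-1},v)$; case (iii) contributes exactly $0$; and you throw all the strictness onto Lemma \ref{l2-2}. But in that configuration Lemma \ref{l2-2} is false, and so is the statement being proved. Take $k=6$ and $h=2$ (or the linear chain $L_h$ for any $h$): the reflection swapping $a_{h-1}\leftrightarrow b_{h-1}$ and reversing each arc is an automorphism of the whole chain that exchanges $u=x_2$ and $v=x_3$, so $\Omega_{P(w)}(u)=\Omega_{P(w)}(v)$, not $<$; termwise, for $z$ the neighbour of $b_1$ in $C_1$ the paper's own $\Delta$--$Y$ reduction gives $\Omega_{P(w)}(z,u)-\Omega_{P(w)}(z,v)$ with the sign of $\theta_1-\theta_2$, which is positive there, so Lemma \ref{l2-2} cannot rescue strictness. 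Hence no argument can close the even-$k$ boundary case; the defect lies in the ``or'' hypothesis of Lemmas \ref{l2-2} and \ref{l2-4} (it should exclude the balanced middle pair, e.g.\ by requiring strict inequality toward both $a_{h-1}$ and $b_{h-1}$, or by restricting to $i\le\frac{k-3}{2}$), and you inherit it from the paper rather than introduce it. With that caveat recorded, your three-case execution proves the lemma in every case where it is true and where the paper uses it.
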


\begin{figure}[ht!]
  \centering
  \scalebox{.24}[.24]{\includegraphics{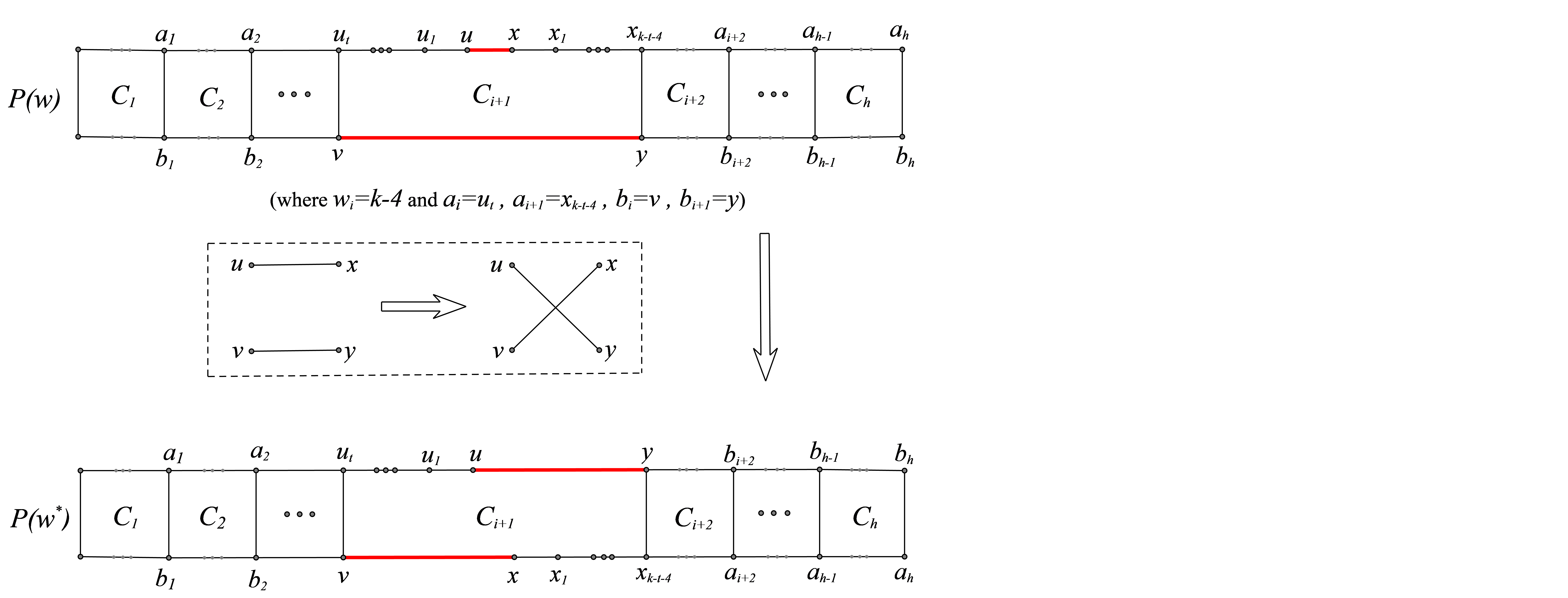}}
  \caption{Illustration of $P(w)$ and $P(w^{*})$ of Lemma \ref{l2-5}.}
 \label{fig-22}
\end{figure}

\begin{lemma}\label{l2-5}
Let $P(w)$ be a $k$-polycyclic chain $($$k\geq 6$$)$, where $w=(w_{1},w_{2},\cdots,w_{h-2})$ with $w_{i}=0$ or $k-4$ for some $i\in\{1,2,\cdots h-2\}$ and $0\leq w_{j}\leq k-4$ for any $j\in\{1,2,\cdots h-2\}\setminus \{i\}$. Let $w^{*}=(w_{1},\cdots,w_{i-1},t,k-4-w_{i+1},\cdots,k-4-w_{h-2})$, where $1\leq t\leq k-5$. Then $Kf(P(w^{*}))>Kf(P(w))$.
\end{lemma}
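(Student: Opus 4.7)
I would first reduce to the case $w_i = 0$ by the global top-bottom reflection $w_j \mapsto k-4-w_j$, which is a graph automorphism of the $k$-polycyclic chain and so preserves $Kf$. The plan is then to apply Lemma~\ref{l13-5} (the $S,T$-isomer formula) with the splitting $N_1 = C_1 \cup \cdots \cup C_{i+1}$ and $N_2 = C_{i+2} \cup \cdots \cup C_h$, cut at the shared edge $a_{i+1} b_{i+1}$; the distinguished vertices are $u = a_{i+1}, v = b_{i+1}$ in $N_1$ and the top/bottom corners $x,y$ of $C_{i+2}$ in $N_2$. The $S$-configuration reproduces $P(w)$ and the $T$-configuration produces $P(w') = P(w_1, \ldots, w_{i-1}, 0, k-4-w_{i+1}, \ldots, k-4-w_{h-2})$, i.e., $P(w)$ with the tail flipped. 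Crucially, $P(w')$ and $P(w^*)$ share the same prefix and the same flipped tail and differ only in $C_{i+1}$: $w'_i = 0$ is extreme while $w^*_i = t \in \{1, \ldots, k-5\}$ is strictly intermediate.

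The proof then splits into two parts. Part (i): show $Kf(P(w^*)) > Kf(P(w'))$. Since $P(w')$ and $P(w^*)$ agree outside $C_{i+1}$, I view $C_{i+1}$ as a $4$-terminal resistor network attached at $a_i, b_i, a_{i+1}, b_{i+1}$ to the fixed outside structure and relocate its $k-4$ interior vertices from the bottom path to the top path one at a time, using series/parallel rules and $\Delta$--$Y$ transformations (Proposition~\ref{p13-3}) together with explicit computation in the spirit of Lemma~\ref{l2-3} to verify that each relocation strictly increases $Kf$ until $w_i = t$; strictness is guaranteed by $1 \leq t \leq k-5$. Part (ii): use Lemma~\ref{l13-5} to write
\begin{equation*}
Kf(P(w)) - Kf(P(w')) = \frac{(\Omega_{N_1}(u) - \Omega_{N_1}(v))(\Omega_{N_2}(y) - \Omega_{N_2}(x))}{\Omega_{N_1}(u,v) + \Omega_{N_2}(x,y) + 2},
\end{equation*}
and control its sign using Lemmas~\ref{l2-3}--\ref{l2-4}. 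Viewing $N_1$ as a $k$-polycyclic chain with terminal polygon $C_{i+1}$ satisfying $w_i = 0$, Lemma~\ref{l2-4} applied at $u = a_{i+1}, v = b_{i+1}$ gives $\Omega_{N_1}(u) < \Omega_{N_1}(v)$; a dual application of Lemma~\ref{l2-4} to the reversed chain $N_2$ (with $C_{i+2}$ now the terminal polygon) controls the sign of $\Omega_{N_2}(y) - \Omega_{N_2}(x)$ in terms of $w_{i+1}$. Combining Parts (i) and (ii) then yields $Kf(P(w^*)) > Kf(P(w))$.

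The principal obstacle is Part (i): relocating a single interior vertex of $C_{i+1}$ shifts its resistance distance to every other vertex of the chain, so the net $Kf$-change is not manifest term-by-term and will require careful algebraic bookkeeping within a reduced model where the rest of the chain acts on $C_{i+1}$ only through fixed effective resistances at its four corners. A secondary subtlety is the sign combination in Part (ii): the sign of $\Omega_{N_2}(y) - \Omega_{N_2}(x)$ depends on the extremal structure of the tail near its head, so when it aligns opposite to $\Omega_{N_1}(u) - \Omega_{N_1}(v)$ one must show that the Part (i) gain dominates the Part (ii) contribution, perhaps by applying Lemma~\ref{l13-5} with a different pivot further along the chain.
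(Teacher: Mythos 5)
Your plan does not match the paper's argument, and it has a genuine gap at its core. First, the decomposition you propose is not a valid $S,T$-isomer decomposition: consecutive polygons in a polycyclic chain share the edge $a_{i+1}b_{i+1}$ (hence two vertices), they are not two vertex-disjoint pieces joined by two edges, so Lemma \ref{l13-5} cannot be applied with the cut ``at the shared edge.'' Even if one repairs this by cutting the two edges of $C_{i+2}$ incident to $a_{i+1}$ and $b_{i+1}$, the resulting swap only reflects the tail $C_{i+2},\cdots,C_{h}$ and leaves $w_{i}$ untouched, so it can never produce $P(w^{*})$ with $w^{*}_{i}=t$. This forces your Part (i), the claim that $Kf$ strictly increases when the $i$-th coordinate alone is moved from $0$ to $t$ with the rest of the chain fixed; that claim is essentially the whole content of the lemma, it does not follow from Lemmas \ref{l2-1}--\ref{l2-4} or from the theorems, and your sketch (relocating interior vertices of $C_{i+1}$ one at a time via series/parallel and $\Delta$--$Y$ reductions) is not carried out --- you yourself flag it as the principal obstacle. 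Part (ii) is also not salvageable as stated: the sign of $\Omega_{N_{2}}(y)-\Omega_{N_{2}}(x)$ depends on $w_{i+1}$ (it can even vanish when the tail is top-bottom symmetric), so $Kf(P(w'))-Kf(P(w))$ may have the wrong sign, and ``the Part (i) gain dominates'' has no quantitative backing.

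The paper avoids all of this by choosing the two-edge cut \emph{inside} the $(i+1)$-th polygon, at the position prescribed by $t$: with $w_{i}=k-4$ (the case $w_{i}=0$ is symmetric), label the top vertices of $C_{i+1}$ as $u_{t},\cdots,u_{1},u,x,x_{1},\cdots,x_{k-t-4}$ and the bottom ones as $v,y$; then $\{ux,vy\}$ is an edge cut, and the single swap $ux,vy\mapsto uy,vx$ turns $P(w)$ directly into $P(w^{*})$ (new middle coordinate $t$, tail flipped). The extremality of $w_{i}$ is exactly what makes this work: in $N_{1}$ the vertex $u$ is the end of a pendant path of length $t$ attached at $a_{i}$ while $v=b_{i}$ lies on the cycle $C_{i}$, and symmetrically for $x,y$ in $N_{2}$; short triangle-inequality and cut-vertex computations (using $\Omega(a_{i},b_{i})<1$ on a cycle) give $\Omega_{N_{1}}(u)>\Omega_{N_{1}}(v)$ and $\Omega_{N_{2}}(x)>\Omega_{N_{2}}(y)$, so one application of Lemma \ref{l13-5} yields $Kf(P(w^{*}))>Kf(P(w))$ with both signs determined, no monotonicity-in-one-coordinate step and no sign bookkeeping needed. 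To repair your write-up you would have to relocate the cut into $C_{i+1}$ in this way; as it stands, the argument is incomplete.
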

\begin{proof}
We only consider $w_{i}=k-4$. The case of $w_{i}=0$ is similar, we omit it.

Since $w_{i}=k-4$, we suppose the $k-2$ vertices on the top of the $(i+1)$-th polygons are
$u_{t},\cdots,u_{2},u_{1},u,x,x_{1},x_{2},\cdots,x_{k-t-4}$. The two vertices on the bottom of the $(i+1)$-th polygons are $v,y$ (see Figure \ref{fig-22}).

It is obvious that the two edges $ux,vy$ are the edge cut of $P(w)$.
Let $P(w^{*})$ be the graph obtained from $P(w)$ by deleting edges $ux,vy$ and adding edges $uy,vx$. Then $P(w)$ and $P(w^{*})$ are isomers.
Suppose that $N_{1}$ is the component of $P(w)\setminus \{ux,vy\}$ containing vertex $u,v$,
$N_{2}$ is the component of $P(w)\setminus \{ux,vy\}$ containing vertex $x,y$.

By Lemma \ref{l13-5}, we have
$$Kf(P(w))-Kf(P(w^{*}))=\frac{(\Omega_{N_{1}}(u)-\Omega_{N_{1}}(v))(\Omega_{N_{2}}(y)-\Omega_{N_{2}}(x))}
{\Omega_{N_{1}}(u,v)+\Omega_{N_{2}}(x,y)+2}.$$

If $z\in V(N_{1})\setminus \{u,v,u_{1},u_{2},\cdots,u_{t}\}$, then by the triangular inequality of resistance distance, we have
\begin{equation}\label{eq:21}
\Omega_{N_{1}}(v,z)\leq \Omega_{N_{1}}(v,u_{t})+\Omega_{N_{1}}(u_{t},z)<1+\Omega_{N_{1}}(u_{t},z)\leq t+\Omega_{N_{1}}(u_{t},z)=\Omega_{N_{1}}(u,z).
\end{equation}

If $z\in \{u_{1},u_{2},\cdots,u_{t}\}$, then by cut-vertex property of resistance distance, we have
\begin{equation}\label{eq:22}
\sum_{i=1}^{t}\Omega_{N_{1}}(v,u_{i})= t\cdot \Omega_{N_{1}}(v,u_{t})+\sum_{i=1}^{t-1}i<\sum_{i=1}^{t}i= \sum_{i=1}^{t}\Omega_{N_{1}}(u,u_{i}).
\end{equation}

Then by $\Omega_{N_{1}}(v,u)=\Omega_{N_{1}}(u,v)$, equations (\ref{eq:21}) and (\ref{eq:22}), we have
$$\Omega_{N_{1}}(v)=\sum_{z\in V(N_{1})}\Omega_{N_{1}}(v,z)<\sum_{z\in V(N_{1})}\Omega_{N_{1}}(u,z)=\Omega_{N_{1}}(u).$$

Now we show $\Omega_{N_{2}}(y)<\Omega_{N_{2}}(x)$. For convenience, we let $k-4-t=s$.

If $z\in V(N_{2})\setminus \{x,y,x_{1},x_{2},\cdots,x_{s}\}$, then by the triangular inequality of resistance distance of Lemma \ref{l13-6}, we have
\begin{equation}\label{eq:23}
\Omega_{N_{2}}(y,z)\leq \Omega_{N_{2}}(y,x_{s})+\Omega_{N_{2}}(x_{s},z)<1+\Omega_{N_{2}}(x_{s},z)\leq s+\Omega_{N_{2}}(x_{s},z) = \Omega_{N_{2}}(x,z).
\end{equation}

If $z\in \{x_{1},x_{2},\cdots,x_{s}\}$, then by cut-vertex property of resistance distance, we have
\begin{equation}\label{eq:24}
\sum_{i=1}^{s}\Omega_{N_{2}}(y,x_{i})= s\cdot \Omega_{N_{2}}(y,x_{s})+\sum_{i=1}^{s-1}i<\sum_{i=1}^{s}i= \sum_{i=1}^{s}\Omega_{N_{2}}(x,x_{i}).
\end{equation}

Then by $\Omega_{N_{2}}(y,x)=\Omega_{N_{2}}(x,y)$, equations (\ref{eq:23}) and (\ref{eq:24}), we have
$\Omega_{N_{2}}(y)<\Omega_{N_{2}}(x)$.

Thus $Kf(P(w^{*}))-Kf(P(w))>0$.
This completes the proof.
\end{proof}

\hskip 0.6cm

\textbf{Proof of Theorem \ref{t1-1}}.
Let $P(w)$ be the $k$-polycyclic chain with $h$ polygons and have the minimum Kirchhoff index.
Then by Lemma \ref{l2-5}, we have $w_{i}=0$ or $w_{i}=k-4$ $(k\geq 6)$ for any $i\in\{1,2,\cdots,h-2\}$. It is obvious that we also have $w_{i}=0$ or $w_{i}=1=k-4$ if $k=5$.

Suppose that $w_{1}=0$, next we show $w_{i}=0$ for $2\leq i\leq h-2$.
Otherwise, there exists $i(2\leq i\leq h-2)$ such that $w_{i-1}=0$ and $w_{i}=k-4$.

Suppose that the $k-2$ vertices on the top of the $(i+1)$-th polygons are
$u,x,x_{1},\cdots,x_{k-4}$, and the two vertices on the bottom of the $(i+1)$-th polygons are $v,y$.

It is obvious that the two edges $ux,vy$ are the edge cut of $P(w)$.
Let $P(w^{*})$ be the graph obtained from $P(w)$ by deleting edges $ux,vy$ and adding edges $uy,vx$. Then $P(w)$ and $P(w^{*})$ are isomers.
Suppose that $N_{1}$ is the component of $P(w)\setminus \{ux,vy\}$ containing vertex $u,v$,
$N_{2}$ is the component of $P(w)\setminus \{ux,vy\}$ containing vertex $x,y$.

By Lemma \ref{l13-5}, we have
$$Kf(P(w))-Kf(P(w^{*}))=\frac{(\Omega_{N_{1}}(u)-\Omega_{N_{1}}(v))(\Omega_{N_{2}}(y)-\Omega_{N_{2}}(x))}
{\Omega_{N_{1}}(u,v)+\Omega_{N_{2}}(x,y)+2}.$$

Since $w_{i-1}=0$, there are two vertices $a_{i-1}$ and $u$ in the top of $i$-th polygons, and $k-2$ vertices in the bottom of $i$-th polygons. By Lemma \ref{l2-3}, we have
$\Omega_{N_{1}}(u)<\Omega_{N_{1}}(v).$

We replace $s(=k-t-4)$ by $k-4$ in the proof of equations (\ref{eq:23}) and (\ref{eq:24}) of Lemma \ref{l2-5}, and we can show
$\Omega_{N_{2}}(y)<\Omega_{N_{2}}(x)$.

Then $Kf(P(w))-Kf(P(w^{*}))>0$. This is a contradiction with that $P(w)$ has the minimum Kirchhoff index. Thus $w_{i}=0$ for $1\leq i\leq h-2$.
This completes the proof.
\hfill $\blacksquare$

\begin{lemma}\label{l2-6}
Let $P(w)$ be a $k$-polycyclic chain $($$k\geq 6$$)$ with $h$ $k$-polygons, $w=(w_{1},w_{2},\cdots,$ $w_{h-2})$ where $0\leq w_{i}\leq k-4$ for $i\in\{1,2,\cdots,h-2\}$. If $w_{i}\geq \lceil \frac{k-4}{2}\rceil+1$ for some $i(1\leq i\leq h-2)$, we take $w^{*}=(w_{1},\cdots,w_{i-1},\lceil \frac{k-4}{2}\rceil,k-4-w_{i+1},\cdots,k-4-w_{h-2})$, then $Kf(P(w^{*}))>Kf(P(w))$.
\end{lemma}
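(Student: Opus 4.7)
The plan is to apply the $S,T$-isomer machinery of Lemma \ref{l13-5} to a single carefully chosen edge cut inside polygon $C_{i+1}$. Write $m=\lceil(k-4)/2\rceil$ and $s=k-4-w_i$; because $w_i\geq m+1$, we have $s<m$. Label the top vertices of $C_{i+1}$ as $a_i=A_0,A_1,\ldots,A_{w_i+1}=a_{i+1}$ and the bottom vertices as $b_i=B_0,B_1,\ldots,B_{s+1}=b_{i+1}$, and set $j=w_i-\lfloor(k-4)/2\rfloor=m-s$, so $1\leq j\leq m\leq w_i$. Cut the two edges $A_jA_{j+1}$ and $B_0B_1$, and let $u=A_j$, $x=A_{j+1}$, $v=B_0=b_i$, $y=B_1$. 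Let $N_1$ be the component containing $\{a_i,b_i\}$ and $N_2$ the component containing $\{a_{i+1},b_{i+1}\}$: then $N_1$ consists of $C_1\cup\cdots\cup C_i$ with a pendant top path of length $j$ attached at $a_i$, while $N_2$ consists of $C_{i+2}\cup\cdots\cup C_h$ with a pendant top path of length $w_i-j$ at $a_{i+1}$ and a pendant bottom path of length $s$ at $b_{i+1}$. Tracing the new $k$-cycle formed when the edges $uy$ and $vx$ are added shows that the upper side of the new polygon $C_{i+1}$ passes through exactly $m$ internal vertices, and polygons $C_{i+2},\ldots,C_h$ have their top/bottom swapped, so the resulting $T$-isomer is exactly $P(w^{\ast})$.

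By Lemma \ref{l13-5},
$$Kf(P(w))-Kf(P(w^{\ast}))=\frac{\bigl(\Omega_{N_1}(u)-\Omega_{N_1}(v)\bigr)\bigl(\Omega_{N_2}(y)-\Omega_{N_2}(x)\bigr)}{\Omega_{N_1}(u,v)+\Omega_{N_2}(x,y)+2},$$
so it suffices to show $\Omega_{N_1}(u)>\Omega_{N_1}(v)$ and $\Omega_{N_2}(x)>\Omega_{N_2}(y)$. For $N_1$, the vertex $a_i$ is a cut vertex separating the pendant $A_1,\ldots,A_j$ from the rest. Let $M_1=V(N_1)\setminus\{a_i,b_i,A_1,\ldots,A_j\}$, $\omega_1=\Omega_{N_1}(a_i,b_i)$, $A_1=\sum_{z\in M_1}\Omega_{N_1}(a_i,z)$, $B_1=\sum_{z\in M_1}\Omega_{N_1}(b_i,z)$. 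Summing $\Omega_{N_1}(u,z)-\Omega_{N_1}(v,z)$ over the three vertex types (top pendant, $a_i$, and $M_1$) and invoking the cut-vertex property at $a_i$ collapses to
$$\Omega_{N_1}(u)-\Omega_{N_1}(v)=j\bigl(|M_1|+1-\omega_1\bigr)+(A_1-B_1).$$
The triangle inequality (Lemma \ref{l13-6}(iv)) gives $|\Omega_{N_1}(a_i,z)-\Omega_{N_1}(b_i,z)|\leq\omega_1$ termwise, hence $|A_1-B_1|\leq|M_1|\omega_1$. Combined with $\omega_1\leq(k-1)/k<1$ (the edge $a_ib_i$ is in parallel with the detour of length $k-1$ through $C_i$), $j\geq 1$, and $|M_1|\geq k-2$, this forces the right-hand side to be strictly positive. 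The same argument applied to $N_2$, whose top pendant has length $w_i-j$ and bottom pendant length $s$ with difference $w_i-m\geq 1$, yields $\Omega_{N_2}(x)>\Omega_{N_2}(y)$.

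The main obstacle is the asymmetry term $A_1-B_1$. In Lemma \ref{l2-5} one of the pendants is trivial and only a single-sided triangle inequality is needed, but here both pendants are nontrivial and the chain $C_1,\ldots,C_i$ can be arbitrarily lopsided between top and bottom, so the sign of $A_1-B_1$ is uncontrolled. The mechanism that rescues the sign is the termwise triangle bound $|A_1-B_1|\leq|M_1|\omega_1$ paired with the strict inequality $\omega_1<1$: the inequality $j(|M_1|+1-\omega_1)>|M_1|\omega_1$ is equivalent to $\omega_1<j(|M_1|+1)/(j+|M_1|)$, whose right-hand side is at least $1$ whenever $j\geq 1$ and $|M_1|\geq 1$. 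With the analogous bound verified for $N_2$, the numerator in Lemma \ref{l13-5} is negative, and $Kf(P(w))<Kf(P(w^{\ast}))$ follows.
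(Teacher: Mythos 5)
Your proposal is correct and follows essentially the same route as the paper: a single $S,T$-isomer swap inside the $(i+1)$-th polygon analyzed with Lemma \ref{l13-5}, with the signs of $\Omega_{N_{1}}(u)-\Omega_{N_{1}}(v)$ and $\Omega_{N_{2}}(x)-\Omega_{N_{2}}(y)$ controlled by the cut-vertex property, the triangle inequality, and the fact that the effective resistance across a shared edge is strictly less than $1$. The only differences are cosmetic: you cut the bottom edge at $b_{i}$ rather than at $b_{i+1}$ (so the two-pendant component is $N_{2}$ instead of $N_{1}$), and you package the comparison as an exact identity together with the aggregate bound $|A_{1}-B_{1}|\le |M_{1}|\omega_{1}$, where the paper argues termwise.
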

\begin{proof}
Let $w_{i}=t\geq \lceil \frac{k-4}{2}\rceil+1$, the $t+2$ vertices on the top of the $(i+1)$-th polygons be
$u_{t-\lfloor \frac{2t-k+4}{2} \rfloor},\cdots,u_{2},u_{1},u,x,x_{1},x_{2},\cdots,x_{\lfloor \frac{2t-k+4}{2} \rfloor}$, and the $k-t-2$ vertices on the bottom of the $(i+1)$-th polygons be $v_{k-t-4},\cdots,v_{2},v_{1},v,y$, respectively.

It is obvious that the two edges $ux,vy$ are the edge cut of $P(w)$.
Let $P(w^{*})$ be the graph obtained from $P(w)$ by deleting edges $ux,vy$ and adding edges $uy,vx$. Then $P(w)$ and $P(w^{*})$ are isomers.
Suppose that $N_{1}$ is the component of $P(w)\setminus \{ux,vy\}$ containing vertex $u,v$,
$N_{2}$ is the component of $P(w)\setminus \{ux,vy\}$ containing vertex $x,y$.

By Lemma \ref{l13-5}, we have
$$Kf(P(w))-Kf(P(w^{*}))=\frac{(\Omega_{N_{1}}(u)-\Omega_{N_{1}}(v))(\Omega_{N_{2}}(y)-\Omega_{N_{2}}(x))}
{\Omega_{N_{1}}(u,v)+\Omega_{N_{2}}(x,y)+2}.$$

If $z\in V(N_{1})\setminus \{u,v,u_{1},u_{2},\cdots,u_{t-\lfloor \frac{2t-k+4}{2} \rfloor},v_{1},v_{2},\cdots,v_{k-t-4}\}$, then by the triangular inequality, cut-vertex property of resistance distance and $t\geq \lceil \frac{k-4}{2}\rceil+1$, we have
\begin{eqnarray*}
\Omega_{N_{1}}(v,z) & \leq & k-t-4+\Omega_{N_{1}}(v_{k-t-4},u_{t-\lfloor \frac{2t-k+4}{2} \rfloor})+\Omega_{N_{1}}(u_{t-\lfloor \frac{2t-k+4}{2} \rfloor},z)\\
& < & t-\lfloor \frac{2t-k+4}{2} \rfloor+\Omega_{N_{1}}(u_{t-\lfloor \frac{2t-k+4}{2} \rfloor},z)\\
& = & \Omega_{N_{1}}(u,z).
\end{eqnarray*}

If $z\in \{u_{1},u_{2},\cdots,u_{t-\lfloor \frac{2t-k+4}{2} \rfloor},v_{1},v_{2},\cdots,v_{k-t-4}\}$, then by cut-vertex property of resistance distance and $t\geq \lceil \frac{k-4}{2}\rceil+1$, we have
$$ \sum_{i=1}^{k-t-4}\Omega_{N_{1}}(v,v_{i})+\sum_{i=1}^{t-\lfloor \frac{2t-k+4}{2} \rfloor}\Omega_{N_{1}}(v,u_{i})<\sum_{i=1}^{k-t-4}\Omega_{N_{1}}(u,v_{i})+\sum_{i=1}^{t-\lfloor \frac{2t-k+4}{2} \rfloor}\Omega_{N_{1}}(u,u_{i}).$$

Thus
$$\Omega_{N_{1}}(v)=\sum_{z\in V(N_{1})}\Omega_{N_{1}}(v,z)<\sum_{z\in V(N_{1})}\Omega_{N_{1}}(u,z)=\Omega_{N_{1}}(u).$$

If $z\in V(N_{2})\setminus \{x,y,x_{1},x_{2},\cdots,x_{\lfloor \frac{2t-k+4}{2} \rfloor}\}$, then by the triangular inequality of resistance distance, we have
$$ \Omega_{N_{2}}(y,z)\leq \Omega_{N_{2}}(y,x_{\lfloor \frac{2t-k+4}{2} \rfloor})+\Omega_{N_{2}}(x_{\lfloor \frac{2t-k+4}{2} \rfloor},z)<1+\Omega_{N_{2}}(x_{\lfloor \frac{2t-k+4}{2} \rfloor},z)\leq \Omega_{N_{2}}(x,z).$$

If $z\in \{x_{1},x_{2},\cdots,x_{\lfloor \frac{2t-k+4}{2} \rfloor}\}$, then by cut-vertex property of resistance distance, similarly we have
$$ \sum_{i=1}^{\lfloor \frac{2t-k+4}{2} \rfloor}\Omega_{N_{2}}(y,x_{i})< \sum_{i=1}^{\lfloor \frac{2t-k+4}{2} \rfloor}\Omega_{N_{2}}(x,x_{i}).$$

Thus
$$\Omega_{N_{2}}(y)<\Omega_{N_{2}}(x).$$

Then $Kf(P(w^{*}))-Kf(P(w))>0$.
This completes the proof.
\end{proof}

Similar to the proof of Lemma \ref{l2-6}, we also have
\begin{lemma}\label{l2-7}
Let $P(w)$ be a $k$-polycyclic chain $($$k\geq 6$$)$ with $h$ $k$-polygons, $w=(w_{1},w_{2},\cdots,$ $w_{h-2})$ where $0\leq w_{i}\leq k-4$ for $i\in\{1,2,\cdots,h-2\}$. If $w_{i}\leq \lfloor \frac{k-4}{2}\rfloor-1$ for some $i(1\leq i\leq h-2)$, we take $w^{*}=(w_{1},\cdots,w_{i-1},\lfloor \frac{k-4}{2}\rfloor,k-4-w_{i+1},\cdots,k-4-w_{h-2})$, then $Kf(P(w^{*}))>Kf(P(w))$.
\end{lemma}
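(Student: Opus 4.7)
The plan is to repeat the proof of Lemma~\ref{l2-6} with the roles of the top and bottom paths of $C_{i+1}$ interchanged: now that $w_i \leq \lfloor\frac{k-4}{2}\rfloor - 1$, the vertex surplus sits on the bottom rather than on the top. Writing $t = w_i$, I would label the $t+2$ top vertices of $C_{i+1}$ as $a_i, u_{t-1}, \ldots, u_1, u, a_{i+1}$ and place the top cut at the last edge $u\,a_{i+1}$, setting $x := a_{i+1}$ (mirroring the placement $y := b_{i+1}$ in Lemma~\ref{l2-6}). I would label the bottom so that from $b_i$ to $v$ inclusive there are exactly $\lceil\frac{k-4}{2}\rceil + 1$ vertices and from $y$ to $b_{i+1}$ inclusive there are $\lfloor\frac{k-4}{2}\rfloor - t + 1$ vertices; the hypothesis forces both counts to be at least $2$, so $v$ and $y$ are interior bottom vertices. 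A direct count then confirms that rewiring $\{ux, vy\} \to \{uy, vx\}$ produces exactly $P(w^*)$ as defined.

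Writing $N_1$ and $N_2$ for the two components of $P(w) \setminus \{ux, vy\}$ containing $\{u, v\}$ and $\{x, y\}$ respectively, Lemma~\ref{l13-5} gives
$$Kf(P(w)) - Kf(P(w^*)) = \frac{(\Omega_{N_1}(u) - \Omega_{N_1}(v))(\Omega_{N_2}(y) - \Omega_{N_2}(x))}{\Omega_{N_1}(u,v) + \Omega_{N_2}(x,y) + 2},$$
so to conclude $Kf(P(w^*)) > Kf(P(w))$ it suffices to prove $\Omega_{N_1}(u) < \Omega_{N_1}(v)$ and $\Omega_{N_2}(x) < \Omega_{N_2}(y)$. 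In $N_1$, the vertex $v$ becomes a pendant attached to the corner $b_i$ by a series path of length $\lceil\frac{k-4}{2}\rceil$, while $u$ is attached to $a_i$ by a series path of length $t$ (a pendant when $t \geq 1$, and coinciding with $a_i$ when $t = 0$). The shared edge $a_i b_i$ is always in parallel with a $(k-1)$-path inside $N_1$ (around $C_i$ when $i \geq 2$, around $C_1$ when $i = 1$), giving $\Omega_{N_1}(a_i, b_i) < 1$; together with $t + 1 \leq \lceil\frac{k-4}{2}\rceil$, the triangular inequality (Lemma~\ref{l13-6}(iv)) yields $\Omega_{N_1}(u, z) < \Omega_{N_1}(v, z)$ for every $z \in V(N_1)$ off the two series paths, while the cut-vertex summation analogous to~(\ref{eq:22}) and (\ref{eq:24}) handles the vertices on those paths. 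The inequality $\Omega_{N_2}(x) < \Omega_{N_2}(y)$ is then established by the same mirror argument in $N_2$, using that $x = a_{i+1}$ reaches $b_{i+1}$ across the shared edge of $C_{i+1}$ and $C_{i+2}$ while $y$ is attached to $b_{i+1}$ by a series path of length $\lfloor\frac{k-4}{2}\rfloor - t$.

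The main obstacle is the borderline case $t + 1 = \lceil\frac{k-4}{2}\rceil$ (occurring when $t = \lfloor\frac{k-4}{2}\rfloor - 1$ and $k$ is even), where the slack in the inequality $t + 1 \leq \lceil\frac{k-4}{2}\rceil$ vanishes and strictness must be recovered entirely from $\Omega_{N_1}(a_i, b_i) < 1$ and $\Omega_{N_2}(a_{i+1}, b_{i+1}) < 1$; the standing hypothesis $1 \leq i \leq h - 2$ guarantees the required parallel $(k-1)$-path in each component, so strict inequality survives. The complementary edge case $t = 0$, in which $u$ coincides with the corner $a_i$ rather than being a pendant, is handled exactly as Lemma~\ref{l2-6} handles its corner vertex $y = b_{i+1}$: the series-path identity continues to hold with series length $0$, and the rest of the argument goes through without change.
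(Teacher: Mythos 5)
Your proposal is correct and coincides with the paper's intended argument: the paper proves Lemma \ref{l2-7} only by the remark ``similar to the proof of Lemma \ref{l2-6}'', and your write-up is precisely that mirrored argument, with the cut edges $ux$ (at $x=a_{i+1}$) and $vy$ placed so that the rewiring yields $P(w^{*})$ and with the sign of the numerator in Lemma \ref{l13-5} reversed in the right way. Your extra attention to the borderline case $t+1=\lceil\frac{k-4}{2}\rceil$, the case $t=0$, and the need for $i\leq h-2$ to ensure $\Omega_{N_2}(a_{i+1},b_{i+1})<1$ only makes explicit what the paper leaves implicit.
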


\textbf{Proof of Theorem \ref{t1-2}}.
Let $P(w)$ be the $k$-polycyclic chain with $h$ polygons and has the maximum Kirchhoff index.
Then by Lemmas \ref{l2-6} and \ref{l2-7}, we have $w_{i}=\lceil \frac{k-4}{2} \rceil$ or $w_{i}=\lfloor \frac{k-4}{2} \rfloor$ $(k\geq 6)$ for $1\leq i\leq h-2$. It is obvious that we also have $w_{i}=1=\lceil \frac{k-4}{2} \rceil$ or $w_{i}=0=\lfloor \frac{k-4}{2} \rfloor$ if $k=5$.

If $k$ is even, the conclusion holds. Next we only consider $k$ is odd, then $\lceil \frac{k-4}{2} \rceil>\lfloor \frac{k-4}{2} \rfloor$.

If there exists $i$ such that $w_{i-1}=w_{i}=\lceil \frac{k-4}{2} \rceil$ or $w_{i-1}=w_{i}=\lfloor \frac{k-4}{2} \rfloor$, we will obtain a contradiction.
We only consider $w_{i-1}=w_{i}=\lceil \frac{k-4}{2} \rceil$, the proof case of $w_{i-1}=w_{i}=\lfloor \frac{k-4}{2} \rfloor$ is similar.

Suppose the $\lceil \frac{k-4}{2} \rceil+2$ vertices on the top of the $(i+1)$-th polygons are $u,x,x_{1},\cdots,x_{\lceil \frac{k-4}{2} \rceil}$. The $\lfloor \frac{k-4}{2} \rfloor+2$ vertices on the bottom of the $(i+1)$-th polygons are $v,y,y_{1},\cdots,y_{\lfloor \frac{k-4}{2} \rfloor}$, respectively.

It is obvious that the two edges $ux,vy$ are the edge cut of $P(w)$.
Let $P(w^{*})$ be the graph obtained from $P(w)$ by deleting edges $ux,vy$ and adding edges $uy,vx$. Then $P(w)$ and $P(w^{*})$ are isomers.
Suppose that $N_{1}$ is the component of $P(w)\setminus \{ux,vy\}$ containing vertex $u,v$,
$N_{2}$ is the component of $P(w)\setminus \{ux,vy\}$ containing vertex $x,y$.

By Lemma \ref{l13-5}, we have
$$Kf(P(w))-Kf(P(w^{*}))=\frac{(\Omega_{N_{1}}(u)-\Omega_{N_{1}}(v))(\Omega_{N_{2}}(y)-\Omega_{N_{2}}(x))}
{\Omega_{N_{1}}(u,v)+\Omega_{N_{2}}(x,y)+2}.$$

Since $w_{i-1}=\lceil \frac{k-4}{2} \rceil>\lfloor \frac{k-4}{2} \rfloor$, then by Lemmas \ref{l2-2} and \ref{l2-4}, we have
$\Omega_{N_{1}}(u)>\Omega_{N_{1}}(v).$
Next we prove $\Omega_{N_{2}}(y)<\Omega_{N_{2}}(x).$

If $z\in V(N_{2})\setminus \{x,y,x_{1},x_{2},\cdots,x_{\lceil \frac{k-4}{2} \rceil},y_{1},y_{2},\cdots,y_{\lfloor \frac{k-4}{2} \rfloor}\}$, Note that the weight of edge $x_{\lceil \frac{k-4}{2} \rceil}y_{\lfloor \frac{k-4}{2} \rfloor} $ is less than $1$ and $\lceil \frac{k-4}{2} \rceil>\lfloor \frac{k-4}{2} \rfloor$. Then by the triangular inequality of resistance distance, we have
$ \Omega_{N_{2}}(y,z)\leq \lfloor \frac{k-4}{2} \rfloor+\Omega_{N_{2}}(y_{\lfloor \frac{k-4}{2} \rfloor},x_{\lceil \frac{k-4}{2} \rceil})+\Omega_{N_{2}}(x_{\lceil \frac{k-4}{2} \rceil},z)<\lceil \frac{k-4}{2} \rceil+\Omega_{N_{2}}(x_{\lceil \frac{k-4}{2} \rceil},z)= \Omega_{N_{2}}(x,z).$

If $z\in \{x_{1},x_{2},\cdots,x_{\lceil \frac{k-4}{2} \rceil},y_{1},y_{2},\cdots,y_{\lfloor \frac{k-4}{2} \rfloor}\}$. Note that the weight of edge $x_{\lceil \frac{k-4}{2} \rceil}y_{\lfloor \frac{k-4}{2} \rfloor} $ is less than $1$ and $\lceil \frac{k-4}{2} \rceil>\lfloor \frac{k-4}{2} \rfloor$. Then by cut-vertex property of resistance distance, similarly we have
$ \sum\limits_{i=1}^{\lceil \frac{k-4}{2} \rceil}\Omega_{N_{2}}(y,x_{i})+\sum\limits_{i=1}^{\lfloor \frac{k-4}{2} \rfloor}\Omega_{N_{2}}(y,y_{i})<\sum\limits_{i=1}^{\lceil \frac{k-4}{2} \rceil}\Omega_{N_{2}}(x,x_{i})+\sum\limits_{i=1}^{\lfloor \frac{k-4}{2} \rfloor}\Omega_{N_{2}}(x,y_{i}).$

Then
$\Omega_{N_{2}}(y)<\Omega_{N_{2}}(x)$.
Thus $Kf(P(w^{*}))-Kf(P(w))>0$. This is a contradiction with that $P(w)$ has the maximum Kirchhoff index.
This completes the proof.
\hfill $\blacksquare$

\section{Conclusions}
\hskip 0.6cm
In this paper, we completely solve the problem about the extremal $k$-polycyclic chains with respect to Kirchhoff index for $k\geq 5$, which extends the results of \cite{suya2023} for $k=5$, \cite{yakl2014,yasu2022} for $k=6$ and \cite{maqi2022} for $k=8$.

In addition to the Kirchhoff index, the Wiener index is also an important molecular descriptor.
Cao et al. \cite{cayz2020} determined the extremal Wiener indices in $k$-polycyclic chains with $k$ is even. Chen et al. \cite{chli2022} determined the expected values of Wiener indices in random $k$-polycyclic chains with $k$ is even.
Thus the problem of determining the extremal Wiener indices in $k$-polycyclic chains with $k$ is odd is still open. We intend to consider the above challenging problems in the future.

\vspace{4mm}
\noindent
{\bf Acknowledgements}\, This research is supported by the National Natural Science Foundation of China (Grant No. 11971180), the Guangdong Provincial Natural Science Foundation (Grant No. 2019A1515012052), the Characteristic Innovation Project of General Colleges and Universities in Guangdong Province (Grant No. 2022KTSCX225) and the Guangdong Education and Scientific Research Project (Grant No. 2021GXJK159).

\baselineskip=0.20in

\end{document}